\newtheorem{theorem}{Theorem}[section]
\newtheorem{lemma}[theorem]{Lemma}
\newtheorem{proposition}[theorem]{Proposition}
\newtheorem*{nonumtheorem}{Theorem}
\newtheorem{corollary}[theorem]{Corollary}
\newtheorem{definition}[theorem]{Definition}
\newtheorem{example}[theorem]{Example}
\newcommand{\Z}{\mathbb{Z}}
\newcommand{\prob}{\mathbf{P}}
\newcommand{\C}{\mathbb{C}}
\newcommand{\cg}[1]{\Z/#1\Z}
\begin{document}

\title{Carries, group theory, and additive combinatorics}
\author{Persi Diaconis, Xuancheng Shao, Kannan Soundararajan}
\address{Department of Mathematics\\ Stanford University\\ 450 Serra Mall, Bldg. 380\\ Stanford, CA 94305-2125}
\email{fshao@stanford.edu}
\email{ksound{@}math.stanford.edu}
\thanks{The first author is supported in part by NSF grant DMS 08-04324.  The third author is supported in part by NSF grant DMS-1001068, and a Simons Investigator award from the Simons Foundation}
\maketitle

\section{Introduction}

 When numbers are added in the usual way {\em carries} occur along the
 route. These carries cause a mess and it is natural to seek ways to
 minimize them. This paper proves that {\em balanced arithmetic}
 minimizes the proportion of carries. It also positions carries as
 {\em cocycles} in group theory and shows that if coset
 representatives for a finite-index normal subgroup $H$ in a group
 $G$ can be chosen so that the proportion of carries is less than
 $2/9$, then there is a choice of coset representatives where no
 carries are needed (in other words, the extension {\em splits}).
 Finally, our paper makes the link between the problems above and
 the emerging field of additive combinatorics. Indeed the tools and
 techniques of this field are used in our proofs, and our examples
 provide an elementary introduction.

\subsection{Carries}
 \begin{example}
Table 1 shows a carries matrix for base $b=10$. Thus when $0$ is
added to one of the digits $0,1,\cdots,b-1$, no carries occur. When
$1$ is added, there is a carry of $b$ at $b-1$. There is a carry of
$b$ in position $i,j$ if and only if $i+j\geq b$.
 \end{example}

\begin{table}[htb]
\caption{Carries matrix for $b=10$. There is a carry of $b$ if and
only if $i+j\geq b$.}
\begin{small}\begin{equation*}\begin{array}{c|cccccccccc|}
&0&1&2&3&4&5&6&7&8&9\\\hline
0&0&0&0&0&0&0&0&0&0&0\\
1&0&0&0&0&0&0&0&0&0&b\\
2&0&0&0&0&0&0&0&0&b&b\\
3&0&0&0&0&0&0&0&b&b&b\\
4&0&0&0&0&0&0&b&b&b&b\\
5&0&0&0&0&0&b&b&b&b&b\\
6&0&0&0&0&b&b&b&b&b&b\\
7&0&0&0&b&b&b&b&b&b&b\\
8&0&0&b&b&b&b&b&b&b&b\\
9&0&b&b&b&b&b&b&b&b&b\\\hline
\end{array}\end{equation*}\end{small}
\label{tab1}
\end{table}

For an arbitrary base $b>1$ with digits $0$, $1$, $\ldots$, $b-1$, the corresponding matrix has
$\binom{b}{2}$ carries.  If the digits are
 chosen uniformly at random, the chance of a carry is $\binom{b}{2}/b^2 = \frac 12 - \frac{1}{2b}$.
 This is $45\%$ when $b=10$.

If $b\Z\subset\Z$ is the subgroup $\{0,\pm b,\pm 2b,\cdots\}$ and
coset representatives are chosen as $\{0,1,2,\cdots,b-1\}$, the
carries are cocycles \cite{isaksen}: $i+j=(i+j)_b+f(i,j)$ with $(i+j)_b$ the
sum modulo $b$ and $f(i,j)$ the `remainder'. Here $f(i,j)=0$ when
$i+j<b$ and $f(i,j)=b$ when $i+j\geq b$. It is natural to ask if
some other choice of coset representatives has fewer carries. The
answer is classically known.

\begin{example}
 For simplicity, take $b$ odd. The
balanced representatives $\{0,\pm 1,\cdots,\pm\tfrac{b-1}{2}\}$ lead
to about half as many carries. For example, when $b=5$, the carries
table for $5\Z\subset\Z$ is shown in \ref{tab2}.
\end{example}

\begin{table}[htb]
\caption{Carries matrix for $b=5$ with signed coset representatives $\{ 0, \pm 1, \pm 2\}$.  Here
$-i$ is coded as $\bar{i}$.}
\begin{small}\begin{equation*}\begin{array}{c|ccccc|}
&\bar2&\bar1&0&1&2\\\hline
\bar2&\bar{b}&\bar{b}&0&0&0\\
\bar1&\bar{b}&0&0&0&0\\
0&0&0&0&0&0\\
1&0&0&0&0&b\\
2&0&0&0&b&b\\\hline
\end{array}\end{equation*}\end{small}
\label{tab2}
\end{table}

For example $(-2)+(-2)=-5+1$ and $2+2=5-1$. The balanced
representatives lead to $6$ carries while the usual choice leads to
$\binom{5}{2}=10$. Signed digit representations have a long history
going back to Colson \cite{colson} and Cauchy \cite{cauchy}. A
careful history is in Cajori \cite{cajori} with Knuth \cite{knuth}
giving further details. The study of carries has links to probability \cite{holte,diaconis2009a} and various parts of algebra \cite{borodin}.

Can one do better?  Why do there have to be any carries?  What is
the best that can be done? These are problems in additive
combinatorics. If $X$ is a choice of coset representatives for $b\Z$
in $\Z$, we are asking  for connections between $X$ and its sumset $X+X$.

\subsection{Group theory}

These questions make sense for any group. For example, the matrix in
\ref{tab1} and \ref{tab2} also give the carries for the cyclic group
$\Z/b\Z\subset\Z/b^2\Z$ with coset representatives
$\{0,1,2,\cdots,b-1\}$ or $\{0,\pm 1,\cdots,\pm (b-1)/2\}$ and
everything interpreted modulo $b^2$. The proofs for $\Z$ do not
carry over to $\Z/b^2\Z$ since $i+j$ might collapse to a coset
representative modulo $b^2$.

Let us now formulate the carries problem precisely when $G$ is a group,
and $H$ a finite index normal subgroup.  Let $X\subset G$ be coset representatives
for $H$ in $G$.   Given two elements $x_1$ and $x_2$ in $X$, there
is a unique third element $x_{12}\in X$ such that $x_{12}^{-1}x_1x_2$ lies in the
subgroup $H$.  Note that if we multiply $x_1 h_1$ and $x_2 h_2$ the
answer is $x_1 x_2 (x_2^{-1} h_1 x_2 h_2) = x_{12}( x_{12}^{-1} x_1 x_2) (x_2^{-1}h_1x_2h_2)$.
In analogy with the usual addition, we view
$x_{12}^{-1} x_1 x_2$ as the {\sl carry} in performing this multiplication.
Thus carries are elements of the subgroup $H$, and a (non-trivial) carry  occurs
for $x,y\in X$ exactly when $x\cdot y$ is not in $X$.

If $X$ is a subgroup (so that necessarily $XH=HX=G$ and $H\cap X=\{1\}$), there are
no carries and the extension $H\subset G$ is said to \textit{split}. For
$\Z/b\Z\subset\Z/b^2\Z$, any choice of coset representatives has $b$
elements and $\Z/b\Z$ is the unique subgroup of $\Z/b^2\Z$ of order
$b$, so the extension fails to split.  Our main theorem shows that
if the extension $H \subset G$ is not split, then there must be
many carries.   To quantify this notion, let us define
\[
 C(X)=\frac{|\{x,y\in X:xy\in X\}|}{|X|^2}.
 \]

%\begin{example}[symmetric group]\label{eg:sym}
%If $G$ is the symmetric group $S_n$ and $H=\{\pi\in S_n:\pi(n)=n\}$,
%then $X$ can be chosen as powers of an $n$-cycle: $X=\{(12\cdots
%n)^j:0\leq j\leq n-1\}$. This $X$ forms a subgroup and $H\cap
%X=\{1\}$ because only the identity element in $X$ fixes $n$. Thus,
%this extension splits.

%The choice of splitting coset representatives is far from unique.
%Indeed, let $K$ be {\em any} group of order $n$. Represent $K$ as a
%subgroup $\bar{K}\subset S_n$ using the regular representation of
%$K$ on itself by left multiplication (Cayley's theorem). Then
%$\bar{K}$ has only the identity element fixing any point, so
%$\bar{K}H=S_n$ and $\bar{K}\cap H=\{1\}$.

%\end{example}

%Return to the general setup with $X$ a choice of coset
%representatives for the finite index subgroup $H\subset G$. A
%measure of how close $X$ is to being a subgroup is
%\[ C(X)=\frac{|\{x,y\in X:xy\in X\}|}{|X|^2}. \]
%Here $C(X)=1$ if and only if there are no carries. Our main theorem
%shows that if $C(X)$ is close to $1$, then there is a choice of
%coset representatives with no carries (so the extension splits).

\begin{theorem}\label{thm:7/9}
Let $X$ be coset representatives for a normal, finite index subgroup
$H$ in a group $G$. If
\[ C(X)>7/9 \]
then there is a subgroup $K$ with $HK=G$, $H\cap K=\{1\}$.
\end{theorem}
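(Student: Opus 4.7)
The plan is to encode the hypothesis cohomologically and then exploit the $2$-cocycle identity coming from associativity. For the given representatives $\{x_\alpha : \alpha \in G/H\}$, define $c: G/H \times G/H \to H$ by $x_\alpha x_\beta = x_{\alpha\beta}\, c(\alpha,\beta)$. Then $C(X)$ is the fraction of pairs on which $c = 1$, so, writing $n = [G:H]$, the hypothesis is that the bad set $B = \{(\alpha,\beta) : c(\alpha,\beta)\ne 1\}$ satisfies $|B| < \tfrac{2}{9}n^2$. A complement $K$ with $HK = G$ and $H\cap K = \{1\}$ exists if and only if $c$ is a coboundary, i.e., there exists $h:G/H\to H$ for which the representatives $y_\alpha = x_\alpha\, h(\alpha)$ are multiplicatively closed.

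The structural engine is associativity. Expanding $(x_\alpha x_\beta)x_\gamma = x_\alpha(x_\beta x_\gamma)$ yields
\[
c(\alpha\beta,\gamma)\cdot c(\alpha,\beta)^{x_\gamma} \;=\; c(\alpha,\beta\gamma)\cdot c(\beta,\gamma),
\]
where $h^g := g^{-1}hg$. This produces a \emph{propagation principle}: among the four values $c(\alpha,\beta), c(\alpha\beta,\gamma), c(\beta,\gamma), c(\alpha,\beta\gamma)$ arising from any triple $(\alpha,\beta,\gamma)$, if three equal $1$ then the fourth does too. A double count now follows: each bad pair in $B$ appears in exactly $n$ triples in each of the four positions, so the number of triples containing at least one bad value is at most $4n|B| < \tfrac{8}{9}n^3$, leaving more than $\tfrac{1}{9}n^3$ \emph{fully good} triples (all four values trivial).

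With this supply of good triples in hand, I would select a reference element $\alpha_0 \in G/H$ for which the row $\beta \mapsto c(\alpha_0,\beta)$ is trivial exceptionally often --- such $\alpha_0$ exists by averaging, since the average row has more than $\tfrac{7}{9} n$ trivial entries --- and modify the representatives to $y_\alpha$ so that the new cocycle $c'$ becomes entirely trivial on the row at $\alpha_0$. The propagation principle then forces $c'$ to be translation-invariant in its first argument under the subgroup $\langle \alpha_0 \rangle \subseteq G/H$, and an inductive repair that shrinks $B$ at each stage while using propagation to preserve previously-achieved triviality should terminate with $c'$ globally trivial, yielding the subgroup $K = \{y_\alpha\}$. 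The main obstacle will be carrying out this repair step cleanly: the conjugation twist $c(\alpha,\beta)^{x_\gamma}$ means that for non-abelian $H$ naive averaging of candidate coboundaries fails, and the order of repairs must respect the $G/H$-action on $H$. I also expect the sharp threshold $\tfrac{7}{9}$ to emerge from near-extremal configurations in which a single ``stubborn'' bad pair resists one-shot repair, forcing a more delicate two-stage analysis to close the gap.
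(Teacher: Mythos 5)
Your setup is sound: the cocycle identity from associativity, the ``three trivial forces the fourth'' propagation principle, and the count showing that more than $\tfrac19 n^3$ triples are fully good are all correct (and these are indeed the combinatorial facts underlying the result). But the proof has a genuine gap precisely where you flag it: the repair step is not an argument, and the strategy you sketch for it would not work as stated. First, you cannot in general modify the representatives so that the new cocycle is \emph{entirely} trivial on the row at $\alpha_0$: replacing $x_\beta$ by $y_\beta = x_\beta h(\beta)$ changes $c(\alpha_0,\beta)$ into an expression involving both $h(\beta)$ and $h(\alpha_0\beta)$, so killing a full row is a coupled system of constraints around the cycles of $\beta\mapsto\alpha_0\beta$, and it is obstructed exactly when the extension fails to split on $\langle\alpha_0\rangle$. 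Second, the claim that propagation then forces translation-invariance under $\langle\alpha_0\rangle$ is unjustified, and the terminating induction that ``shrinks $B$ at each stage'' is asserted, not constructed; nothing prevents a repair from creating new bad pairs elsewhere.

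The missing idea is a \emph{global} plurality-vote construction rather than propagation from a single reference row. The paper invokes the Ben-Or--Coppersmith--Luby--Rubinfeld structure theorem: for the map $f:G/H\to G$ sending a coset to its representative (which is exactly a $C(X)$-approximate homomorphism), one defines $\phi(g)$ to be the \emph{most frequent} value of $f(gg')f(g')^{-1}$ over all $g'$, and shows that when $\epsilon>7/9$ this plurality is well defined, $\phi$ is a genuine homomorphism, and $\phi$ agrees with $f$ outside a set of density $\tau(\epsilon)<1/10$; the threshold $7/9$ comes from the quadratic $3x-6x^2=1-\epsilon$, not from isolated stubborn pairs. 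Even granting such a $\phi$, your write-up omits the final step: one must still show its image $K$ is a complement. The paper does this by noting $|K\cap X|>\tfrac{9}{10}|G/H|$ forces $\ker\phi$ to be trivial, and that if $K$ contained a nontrivial $\ell\in H$ then at most one of $k,k\ell$ could lie in $X$ for each $k\in K$, giving $|K\cap X|\le|K|/2$, a contradiction; hence $K\cap H=\{1\}$ and $HK=G$. To turn your sketch into a proof you would need to either supply the BCLR argument (or an equivalent) in your cocycle language, or cite it; the propagation-and-repair induction as described does not close.
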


From Theorem 12 on page 182 of \cite{DF} for example, one sees that the
structure of $G$ above may be described as the semi-direct product of the
normal subgroup $H$ and the group $K$.
Further, the constant $7/9$ is sharp as seen by taking $3\Z\subset\Z$ with
balanced coset representatives.

\subsection{Additive combinatorics}

The problems discussed above may be seen as part of additive
combinatorics. A basic question in this area asks how the size $|X\cdot X|$ depends on the
structure of $X$. If $X$ is a subgroup, then $|X\cdot X|=|X|$. For a random set, one may expect
$X\cdot X$ to have about $|X|^2$ elements.  What happens $X\cdot X$ contains unusually few
elements; for example what if $|X\cdot X|\leq 2|X|$?  The structure of such sets is
studied in additive combinatorics, which is a burgeoning
area of mathematics with applications in computer science
\cite{trevisan}, harmonic analysis \cite{laba}, number theory \cite{nathanson},
combinatorics and elsewhere. It has spanned a host of new techniques
(e.g. Szemer\'{e}di's regularity lemma \cite{szemeredi,szemeredi-survey}, higher Fourier
analysis \cite{gowers1,gowers2,higher-fourier}). It gives connections between formerly disparate
areas of mathematics (e.g. combinatorics, number theory and ergodic
theory). There are striking results, such as the Green-Tao theorem
that the primes contain arbitrarily long arithmetic progression
\cite{green-tao1,green-tao2}.

Our theorems offer a gentle introduction to this field in a natural
problem. The proof of the main theorem uses results on {\em
approximate homomorphisms} first studied by computer scientists for
property testing (the study of large systems from properties of
small samples). A second related result is given in
\ref{sec:59/60}; here the situation is more general than in Theorem \ref{thm:7/9} but the
conclusion is weaker. This uses an argument of Fournier, familiar in
  additive combinatorics, to show that any finite subset $X$ of a group $G$ is almost a
subgroup if $C(X)$ is large.

Our route to the discovery and proof of Theorem \ref{thm:7/9} has
some lessons. Our first results were limited to $p\Z/p^2\Z$ in
$\Z/p^2\Z$, and they were asymptotic: if $X$ is a set of coset
representatives then $C(X)\leq 3/4+\epsilon$ provided $p$ is a
sufficiently large prime (depending on $\epsilon$). Here the
dependence on $\epsilon$ is exponential. The argument uses
\textit{rectification} \cite{bilu-lev-ruzsa,green-ruzsa}, which
roughly speaking converts additively structured subsets of $\Z/p\Z$
to subsets of $\Z$. Later we found out that we could get rid of the
asymptotics, proving that $C(X)\leq (3p^2+1)/(4p^2)$ for any odd
prime $p$ using a theorem of Lev \cite{lev}. This was done
independently by Alon \cite{alon}. All of these arguments rely on
the primality of the base $p$.

\ref{sec:easy} gives a very easy proof of the optimality of balanced
coset representatives for $b\Z\subset\Z$. Theorem
\ref{thm:7/9} is proved in \ref{sec:7/9}. A different proof
(with $C(X)\geq 59/60$ implying splitting) appears in
\ref{sec:59/60}. In Table 2 above there are three types of carries:
$0,+b,-b$, while only $0$ and $+b$ appear with the usual choice of
digits. This is shown to characterize the usual digits in
\ref{sec:twocarries}. The final section presents some problems and
conjectures. We do not know the answer to some simple related
questions: how well can one do for $(b\Z)^2\subset\Z^2$?

\smallskip

{\bf Acknowledgments.}   We are grateful to Ben Green, Bob Guralnick and Marty Isaacs
for many valuable discussions.

%%%%%%%%%%%%%%%%%%%%%%%%%%%%%%%%%%%%%%%%%%%%%%%%%%%%%%%%%%%%%%%%%%%%%
%%%%%%%%%%%%%%%%%%%%%%%%%% EASIEST case %%%%%%%%%%%%%%%%%%%%%%%%%%%%%
%%%%%%%%%%%%%%%%%%%%%%%%%%%%%%%%%%%%%%%%%%%%%%%%%%%%%%%%%%%%%%%%%%%%%

\section{The easiest case: Minimality of balanced digits for $\Z$}\label{sec:easy}

For $b$ a positive integer, consider $b\Z\subset\Z$. Choose coset
representatives $\mathfrak{X}=\{0,x_1,x_2,\cdots,x_{b-1}\}$ in $\Z$.
There is a carry at $i,j$ if $x_i+x_j\notin\mathfrak{X}$. The
following proposition shows that any choice for $\mathfrak{X}$
results in at least $\lfloor b^2/4\rfloor$ carries. Balanced coset
representatives give this and so are best (in this sense). In fact,
the argument works for any set of $b$ real numbers.

\begin{proposition}\label{prop:Z}
Let $\mathfrak{X}=\{0,x_1,\cdots,x_{b-1}\}$ be distinct real
numbers. Then $\mathfrak{X}$ induces at least $\lfloor b^2/4\rfloor$ carries.
\end{proposition}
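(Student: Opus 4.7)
The plan is to count non-carries --- ordered pairs $(x,y)\in\mathfrak{X}^2$ with $x+y\in\mathfrak{X}$ --- and show there are at most $\lceil 3b^2/4\rceil$ of them, which is equivalent to at least $\lfloor b^2/4\rfloor$ carries. Writing $\mathfrak{X}=\mathfrak{X}^-\sqcup\{0\}\sqcup\mathfrak{X}^+$ with $|\mathfrak{X}^+|=m$ and $|\mathfrak{X}^-|=n$ (so $m+n=b-1$), I would partition the non-carries according to the signs of the two coordinates: pairs where at least one coordinate is $0$ (exactly $2b-1$ such, from the two axes through the origin); pairs lying in $\mathfrak{X}^+\times\mathfrak{X}^+$ with count $T^+$; the symmetric $T^-$; and opposite-sign nonzero pairs with count $T^{\pm}$.

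The key step is the same-sign bound. Write $\mathfrak{X}^+=\{p_1<p_2<\cdots<p_m\}$; since $p_i+p_j>\max(p_i,p_j)$, any non-carry $p_i+p_j=p_k$ forces $k>\max(i,j)$. So for each $k\le m$ the qualifying pairs lie in $\{1,\ldots,k-1\}^2$, and $j$ is determined by $i$, giving at most $k-1$ such pairs. Summing, $T^+\le\sum_{k=1}^m(k-1)=\binom{m}{2}$, and symmetrically $T^-\le\binom{n}{2}$. For mixed pairs I would use only the trivial bound $T^{\pm}\le 2mn$, namely the total number of such ordered pairs.

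Putting these together,
\[
T\le (2b-1)+\binom{m}{2}+\binom{n}{2}+2mn=(2b-1)+\binom{b-1}{2}+mn,
\]
and $mn\le\lfloor(b-1)^2/4\rfloor$ since $m+n=b-1$ is fixed. A short calculation separating the two parities of $b$ verifies that the right-hand side equals exactly $\lceil 3b^2/4\rceil$, which gives the proposition.

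The one place where one might expect trouble is that the mixed-sign estimate $T^{\pm}\le 2mn$ is completely trivial; reassuringly, no slack is needed elsewhere, since equality propagates throughout the argument precisely when $\mathfrak{X}^+$ and $\mathfrak{X}^-$ are arithmetic progressions meeting at $0$ with $|m-n|\le 1$ --- the balanced coset representatives that motivated the problem.
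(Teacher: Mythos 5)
Your proof is correct, and it is essentially the paper's argument in complementary form: the paper lower-bounds the carries inside each same-sign block by $\binom{c+1}{2}$ and $\binom{b-c}{2}$, which is exactly dual to your upper bounds $T^{+}\le\binom{m}{2}$, $T^{-}\le\binom{n}{2}$ on non-carries, both resting on the same observation that a same-sign sum exceeds both summands. The final optimization over the sign split ($mn\le\lfloor (b-1)^2/4\rfloor$ versus the paper's completed square) is also the same, so this is the same proof viewed from the non-carry side.
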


\begin{proof}
Let there be $c$ positive and $(b-1-c)$ negative elements in
$\mathfrak{X}$. Say $0<y_1<y_2<\cdots<y_c$ are the positives. Then,
adding $y_c$ results in at least $c$ carries. Adding $y_{c-1}$
results in at least $c-1$ carries. Continuing in this fashion,
adding $y_1$ results in at least $1$ carry. This forces at least
$c(c+1)/2$ carries. Similarly, the negative elements in
$\mathfrak{X}$ force at least $(b-1-c)(b-c)/2$ carries, thus obtaining
altogether
\[
\tfrac{1}{2}[c(c+1)+(b-1-c)(b-c)] = \frac{b^2-1}{4} + \Big( c- \frac{b-1}{2}\Big)^2
\]
carries.   This proves the Proposition.
\end{proof}

By examining the above proof, we may check that $\lfloor b^2/4\rfloor$ carries
are  attained only if $\mathfrak {X}$ is of the form $\{ xn:  \, - \lfloor b/2\rfloor < n\le \lfloor b/2\rfloor\}$
for some $x\neq 0$.  Thus, for $b\Z \subset \Z$ balanced coset representatives and
their dilates by any number $a$ relatively prime to $b$ are the only examples with $\lfloor b^2/4\rfloor$
carries.

%Balanced coset representatives are not unique in minimizing the
%number of carries. For $b\Z\subset\Z$, if $\mathfrak{X}$ is any set
%of coset representatives and $a$ is relatively prime to $b$, then
%$a\mathfrak{X}$ gives coset representatives with the same number of
%carries. For $b$ even, both $\{0,\pm 1,\cdots,pm (b-2)/2,b/2\}$ and
%$\{0,\pm 1,\cdots,\pm (b-2)/2,-b/2\}$ minimize the number of
%carries.

In the other direction, it is easy to (foolishly) choose coset
representatives $\mathfrak{X}$ for $b\Z$ in $\Z$ such that every sum
results in a carry. For example choose $\{b,b+1,\cdots,2b-1\}$.

%%%%%%%%%%%%%%%%%%%%%%%%%%%%%%%%%%%%%%%%%%%%%%%%%%%%%%%%%%%
%%%%%%%%%%%%%%%%%%%%%%% Z/pZ %%%%%%%%%%%%%%%%%%%%%%%%%%%%%%
%%%%%%%%%%%%%%%%%%%%%%%%%%%%%%%%%%%%%%%%%%%%%%%%%%%%%%%%%%%

\section{The next case: Minimality of balanced digits for cyclic groups}\label{sec:cyclic}

This section studies the following problem: consider
$p(\Z/p^2\Z)$ as a subgroup of $\Z/p^2\Z$ for an odd prime $p$. The usual coset representatives are
$\{0,1,2,\dots,p-1\}$. Balanced coset representatives are $\{0,\pm
1,\dots,\pm (p-1)/2\}$. The carries matrices are the same
as for $p\Z\subset\Z$. The following proposition implies that balanced coset representatives again give the minimum number of carries.

\begin{proposition}\label{prop:Zp}
Let $p$ be an odd prime. Let $X\subset\Z/p^2\Z$ be coset representatives for the subgroup $p(\Z/p^2\Z)$ in $\Z/p^2\Z$. Then $X$ induces at least $(p^2-1)/4$ carries.
\end{proposition}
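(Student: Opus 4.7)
The plan is to reduce to Proposition \ref{prop:Z} by lifting to $\Z$. Each coset representative can be written uniquely as $x_a = a + p\phi(a)$ with $a, \phi(a) \in \{0, 1, \ldots, p-1\}$, so $X$ is parameterized by a function $\phi : \{0, \ldots, p-1\} \to \{0, \ldots, p-1\}$. A direct calculation in $\Z/p^2\Z$ shows that $(a, b)$ is a non-carry exactly when
\[
\phi(a + b \bmod p) \equiv \phi(a) + \phi(b) + \chi(a, b) \pmod p,
\]
where $\chi(a, b) = \lfloor (a + b)/p \rfloor \in \{0, 1\}$ is the standard cocycle for the extension $p(\Z/p^2\Z) \subset \Z/p^2\Z$. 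Taking integer lifts $\tilde y_a = a + p\phi(a) \in [0, p^2 - 1]$ produces $p$ distinct integers $\tilde X \subset \{0, \ldots, p^2-1\}$ forming a transversal of the residue classes mod $p$. Since $\tilde y_a + \tilde y_b \in [0, 2p^2 - 2]$, the non-carry condition in $\Z/p^2\Z$ separates into two cases: $\tilde y_a + \tilde y_b \in \tilde X$ (the no-wrap case, counted by $N_0$) or $\tilde y_a + \tilde y_b \in \tilde X + p^2$ (the wrap case, counted by $N_1$). The goal is $N_0 + N_1 \le (3p^2 + 1)/4$.

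Proposition \ref{prop:Z} applied to the $p$ real numbers of $\tilde X$ bounds $N_0 \le (3p^2 + 1)/4$. A parallel argument on the shifted set $\tilde X - p^2$ bounds $N_1$ separately, but the naive sum of these two bounds is too weak. Sharpening is the main obstacle. The transversal property of $\tilde X$ is essential here: without it (e.g.\ for $\tilde X = \{0, p, 2p, \ldots, (p-1)p\}$) all $p^2$ pairs are non-carries and the desired bound fails.

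To close the gap, I would apply Proposition \ref{prop:Z} (or a direct recount following its proof) to the doubled set $\mathfrak X = \tilde X \cup (\tilde X + p^2)$ of size $2p$ and partition the pairs in $\mathfrak X^2$ by which half each coordinate lies in; this expresses the number of $(x, y) \in \mathfrak X^2$ with $x + y \in \mathfrak X$ as $(N_0 + N_1) + 2N_0$, producing a linear relation that couples $N_0$ and $N_1$. As the introduction anticipates, obtaining the exact constant $(3p^2 + 1)/4$ then goes through a sumset inequality in $\Z/p\Z$ due to Lev, which effectively rectifies $X$ as a Freiman-isomorphic subset of $\Z$; Proposition \ref{prop:Z} applied in $\Z$ then yields the sharp bound. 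This rectification step via Lev's theorem is the technical heart of the proof, and its role is to convert modular sumset information back to an honest $\Z$-problem where the balanced representatives appear as the extremal configuration.
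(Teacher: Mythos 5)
Your reduction is set up correctly: lifting $X$ to a transversal $\tilde X\subset\{0,\dots,p^2-1\}$, splitting the non-carries into the no-wrap count $N_0$ and the wrap count $N_1$, and aiming for $N_0+N_1\le(3p^2+1)/4$ is a faithful reformulation (modulo the small point that Proposition \ref{prop:Z} as stated assumes $0\in\mathfrak X$, which need not hold for $\tilde X$). But the proof stops exactly where the work begins. The doubled-set identity gives $3N_0+N_1\le 3p^2$ (after again patching the $0\in\mathfrak X$ issue), and even combined with the trivial $N_1\le p^2$ this allows $N_0+N_1$ as large as $p^2$, nowhere near $(3p^2+1)/4$. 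More seriously, the route you sketch to close the gap --- ``rectify $X$ as a Freiman-isomorphic subset of $\Z$ via a sumset inequality of Lev, then apply Proposition \ref{prop:Z} in $\Z$'' --- cannot work as a proof of the full statement. Rectification theorems (Freiman's $2.4$ theorem, Bilu--Lev--Ruzsa, Green--Ruzsa) require a small-doubling hypothesis on $A$, and an arbitrary transversal $X$ of $p(\Z/p^2\Z)$ has no such structure: a generic $X$ has $|X+X|$ close to $p^2$ and is not Freiman isomorphic to any subset of the integers. Yet the bound must hold for every transversal, structured or not. So the ``technical heart'' you defer to is not merely unproved; the proposed mechanism does not apply to the objects in question.

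The paper's actual argument is different and avoids lifting to $\Z$ altogether. It deduces the Proposition from Proposition \ref{prop:sr}: for any three transversals $A_1,A_2,A_3$ of $p(\cg{p^2})$ in $\cg{p^2}$, the number of solutions of $a_1+a_2=a_3$ is at most $(3p^2+1)/4$; taking $A_1=A_2=A_3=X$ bounds the non-carries and hence gives at least $p^2-(3p^2+1)/4=(p^2-1)/4$ carries. Proposition \ref{prop:sr} in turn follows from Pollard's theorem for composite modulus (Theorem 3.3 and Corollary 3.4): the transversal property is precisely the hypothesis $(x-y,p^2)=1$ for distinct $x,y\in A_i$, and the corollary gives $n(0,A_1,A_2,-A_3)\le n(0,I,I,I)=(3p^2+1)/4$ for $I$ the length-$p$ interval centered at the origin. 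The reference to Lev in the paper is to an argument whose engine is Pollard's theorem, not to a rectification statement. If you want to complete your write-up, replace the rectification step by an appeal to Pollard's theorem in $\cg{p^2}$ (or reprove the relevant case of it); the comparison with the centered interval is exactly where the balanced representatives emerge as extremal.
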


Proposition \ref{prop:Zp} is a consequence of the following result,
proved below.
\begin{proposition}\label{prop:sr}
Let $p$ be an odd prime. Let $A_1,A_2,A_3\subset\cg{p^2}$ be three
sets of coset representatives for $p (\cg{p^2}) \subset\cg{p^2}$. Then the
number of solutions to $a_1+a_2=a_3$ with $a_1\in A_1$, $a_2\in
A_2$, and $a_3\in A_3$ is at most $(3p^2+1)/4$.
\end{proposition}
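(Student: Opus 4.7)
My plan is to parametrize the three cross-sections by functions $\psi_i : \mathbb{Z}/p\mathbb{Z} \to \mathbb{Z}/p\mathbb{Z}$, translate the equation $a_1 + a_2 = a_3$ in $\mathbb{Z}/p^2\mathbb{Z}$ into a congruence modulo $p$ involving the usual decimal carry, and then apply a theorem of Lev (alluded to in the introduction of the paper) to obtain the sharp bound $(3p^2+1)/4$.

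Writing each $A_i = \{r + p\,\psi_i(r) : r \in \mathbb{Z}/p\mathbb{Z}\}$ with integer representatives $r \in \{0, 1, \ldots, p-1\}$, an elementary computation shows that $a_1 + a_2 = a_3$ in $\mathbb{Z}/p^2\mathbb{Z}$ if and only if
\[
 \psi_3\bigl((r_1+r_2) \bmod p\bigr) \;\equiv\; \psi_1(r_1) + \psi_2(r_2) + c(r_1, r_2) \pmod{p},
\]
where $c(r_1, r_2) \in \{0, 1\}$ is the carry ($c = 1$ iff $r_1 + r_2 \ge p$ as integers). Hence $N$ equals the number of pairs $(r_1, r_2) \in (\mathbb{Z}/p\mathbb{Z})^2$ satisfying this congruence. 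Organizing by $r_3 := (r_1+r_2) \bmod p$, and letting $n_{r_3}(k)$ denote the number of pairs on the anti-diagonal $r_1+r_2 \equiv r_3 \pmod p$ whose right-hand side equals $k$, one has $\sum_k n_{r_3}(k) = p$ and
\[
 N \;\le\; \sum_{r_3 \in \mathbb{Z}/p\mathbb{Z}} \max_{k}\, n_{r_3}(k),
\]
since choosing $\psi_3(r_3)$ amounts to selecting one value of $k$ at each $r_3$. Splitting each anti-diagonal into its ``no carry'' half ($r_1 \le r_3$, with $r_2 = r_3 - r_1$) and its ``carry'' half ($r_1 > r_3$, with $r_2 = r_3 - r_1 + p$) yields a decomposition $n_{r_3}(k) = u_{r_3}(k) + v_{r_3}(k-1)$, where $u_{r_3}$ and $v_{r_3}$ count solutions to convolution-type equations $\psi_1(r_1) + \psi_2(r_3 - r_1) \equiv k \pmod p$ restricted to the corresponding half of $\{0, \ldots, p-1\}$.

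It remains to prove $\sum_{r_3} \max_k n_{r_3}(k) \le (3p^2+1)/4$. The extremal configuration corresponds to $\psi_1, \psi_2$ affine with a common slope---up to dilation this is the base example $\psi_1 \equiv \psi_2 \equiv 0$ (i.e.\ $A_1 = A_2 = \{0, 1, \ldots, p-1\}$)---for which one checks directly that $\max_k n_{r_3}(k) = \max(r_3+1,\, p-1-r_3)$ and $\sum_{r_3=0}^{p-1} \max(r_3+1, p-1-r_3) = (3p^2+1)/4$ by a straightforward telescoping. Lev's sumset theorem is what allows one to conclude that any departure from this affine structure strictly decreases the global sum. The main obstacle is that the desired inequality cannot be established term-by-term: one can construct $\psi_1, \psi_2$ for which $\max_k n_{r_3}(k)$ exceeds $\max(r_3+1, p-1-r_3)$ at individual $r_3$ (for instance, with $p = 5$, $\psi_1 \equiv 0$, and $\psi_2$ equal to $0$ on $\{0, 1, 2, 3\}$ but to $4$ at $r=4$, a direct calculation gives $\max_k n_2(k) = 4 > 3$). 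Thus a genuine amortization across all $r_3$ is required; this is supplied by Lev's structural bound on how concentrated the level sets of the convolution $\psi_1 \ast \psi_2$ in $\mathbb{Z}/p\mathbb{Z}$ can be, which---combined with a careful bookkeeping of the two-half split and the binary-valued carry perturbation---delivers the sharp constant $3/4$.
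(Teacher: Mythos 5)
Your reduction is correct as far as it goes: writing $A_i=\{r+p\,\psi_i(r)\}$, converting $a_1+a_2=a_3$ into the congruence $\psi_3(r_3)\equiv\psi_1(r_1)+\psi_2(r_2)+c(r_1,r_2)\pmod p$, and bounding $N\le\sum_{r_3}\max_k n_{r_3}(k)$ are all valid, and your computation that the base configuration $\psi_1\equiv\psi_2\equiv 0$ yields exactly $(3p^2+1)/4$ checks out. But the proof stops precisely where the proposition begins. The inequality $\sum_{r_3}\max_k n_{r_3}(k)\le (3p^2+1)/4$ for \emph{arbitrary} $\psi_1,\psi_2$ is the entire content of the result, and you do not prove it: you assert that ``Lev's sumset theorem'' and ``Lev's structural bound on how concentrated the level sets of the convolution can be'' deliver it, without stating any such theorem, verifying its hypotheses, or exhibiting the amortization it is supposed to provide. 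Indeed your own $p=5$ example shows the term-by-term bound fails, so some global argument is indispensable --- and that argument is exactly what is missing. An appeal to an unstated black box whose applicability is not checked is not a proof, particularly since the constant $3/4$ is sharp and any slack in the argument would lose it.

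For comparison, the paper's route avoids the mod-$p$ fibering altogether. It invokes Pollard's theorem for composite modulus: since the $A_i$ are coset representatives for $p(\Z/p^2\Z)$, any two distinct elements satisfy $(x-y,p^2)=1$, so the hypothesis of Pollard's theorem holds for all three sets. A corollary of Pollard's theorem then gives $\max_x n(x,A_1,A_2,-A_3)\le\max_x n(x,I,I,I)$ where $I$ is the length-$p$ interval centered at the origin, and a direct count gives $n(0,I,I,I)=(3p^2+1)/4$. If you want to salvage your approach, you would need to either prove the amortized inequality directly (which amounts to redoing Lev's or Pollard's argument in the fibered coordinates) or, more simply, recognize that the coset-representative hypothesis is exactly what lets you apply a Pollard-type comparison in $\Z/p^2\Z$ and quote that theorem with its hypotheses verified.
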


The problem of counting the number of solutions to linear equations
in finite fields has been studied in \cite{lev}. The strategy there
is to use Pollard's theorem \cite{pollard}. Our situation is
slightly different in that we are working in $\cg{p^2}$, which is
not a finite field. However, we can still follow the argument in
\cite{lev}, making use of a version of Pollard's theorem for
composite modulus \cite{pollard}.
\begin{theorem}[Pollard]
Let $m$ be a positive integer. Let $A_1,A_2,\dots,A_k$ be subsets of
$\cg{m}$ and let $A_1',A_2',\dots,A_k'$ be another $k$ subsets of
$\cg{m}$ such that each $A_i'$ consists of consecutive elements and
has $|A_i'|=|A_i|$. Write
\[ S(A_1,A_2,\dots,A_k,r)=\sum_{x\in\cg{m}}\min(r,n(x,A_1,A_2,\dots,A_k)), \]
where $n(x,A_1,A_2,\dots,A_k)$ is the number of representations of
$x$ as $x=a_1+a_2+\dots+a_k$ ($a_i\in A_i$). Define
$S(A_1'A_2',\dots,A_k',r)$ similarly. Suppose that at least $k-1$ of
the sets $A_i$ have the property that
\[ (x-y,m)=1\text{ for }x,y\in A_i\text{ and }x\neq y. \]
Then
\[ S(A_1,A_2,\dots,A_k,r)\geq S(A_1',A_2',\dots,A_k',r). \]
\end{theorem}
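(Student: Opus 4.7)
The strategy is to reduce the statement to a level-set inequality and then deploy the Dyson (or $e$-) transform iteratively. First I would invoke the layer-cake identity
\[
S(A_1, \ldots, A_k, r) = \sum_{j=1}^{r} |N_j|, \qquad N_j = \{ x \in \cg{m} : n(x, A_1, \ldots, A_k) \ge j \},
\]
and the analogous identity for the compressed sets $A_i'$. Since each $A_i'$ is a run of consecutive residues, $n(x, A_1', \ldots, A_k')$ is a convolution of interval indicator functions, so $|N_j'|$ admits an explicit tent-function formula in the $|A_i|$ and $m$. It therefore suffices to establish the termwise bound $|N_j| \ge |N_j'|$ for every $1 \le j \le r$.

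My plan is induction on $k$. The base case $k = 2$ is Pollard's original inequality in $\cg{m}$; I would prove it by the Dyson transform, fixing some $e$ in the set with coprime differences and replacing $(A, B)$ by $(A \cup (B + e), B \cap (A - e))$. This operation preserves $|A| + |B|$ and does not shrink any level set $N_j$. Iterating the transform drives the configuration toward one in which one set sits inside a translate of the other, and at that extreme position the bound $|N_j| \ge |N_j'|$ can be read off directly from the tent-function formula. For the inductive step I would bundle the $k - 1$ sets with the gcd hypothesis together, apply the inductive hypothesis to the joint representation function (recast as a multiplicity function, not just an indicator), and then merge with the remaining set using the $k = 2$ case.

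The principal obstacle will be propagating the gcd condition through the transform. The set $B \cap (A - e)$ need not inherit coprimality of its pairwise differences to $m$, so I would either track a weaker invariant (that the subgroup of $\cg{m}$ generated by these differences is still the whole group) or invoke a Kneser-type structure theorem to rule out the extremal configuration in which the $N_j$ collapse into unions of cosets of a proper subgroup. The hypothesis that $k - 1$ of the sets have pairwise differences coprime to $m$ is precisely what excludes such a collapse, which also explains why one of the sets is permitted to be arbitrary: its contribution can be absorbed once the other $k - 1$ sets have spread the joint sumset across all of $\cg{m}$.
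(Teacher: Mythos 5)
First, a point of orientation: the paper does not prove this theorem at all --- it is quoted as a known result and attributed to Pollard via the citation \cite{pollard}, so there is no in-paper proof to compare against. Judged on its own terms, your proposal contains a genuine gap at the very first step. The layer-cake identity $S(A_1,\dots,A_k,r)=\sum_{j=1}^r |N_j|$ is fine, but the reduction to the termwise inequality $|N_j|\ge |N_j'|$ for every $j$ is a strictly stronger claim than Pollard's theorem, and it is false. Take $k=2$ and $A_1=A_2=\{0,1,3\}$ inside $\cg{m}$ for $m$ large and coprime to $6$ (so the gcd hypothesis holds). The representation counts of $0,1,2,3,4,6$ are $1,2,1,2,2,1$, so $|N_3|=0$; for the intervals $A_1'=A_2'=\{0,1,2\}$ the counts are $1,2,3,2,1$ and $|N_3'|=1$. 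Only the partial sums $\sum_{j\le r}|N_j|$ are comparable (here $9\ge 9$ for $r=3$), which is exactly why Pollard's inequality is stated for the truncated sum $S(\cdot,r)$ rather than for individual level sets. Any proof must work with the aggregate quantity, and your endgame --- ``at that extreme position the bound $|N_j|\ge |N_j'|$ can be read off'' --- collapses at precisely this point.

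Two further issues would need repair even after abandoning the termwise reduction. The Dyson transform $(A,B)\mapsto(A\cup(B+e),\,B\cap(A-e))$ satisfies $n(x,A',B')\le n(x,A,B)$ for every $x$ (there is an injection from representations of $x$ in $A'+B'$ to representations in $A+B$), so it can only \emph{shrink} level sets and the quantity $S$; your claim that it ``does not shrink any level set'' is in the wrong direction, and the correct argument must instead show that the terminal configuration of the iteration still dominates the interval value of $S(\cdot,r)$ --- this is where the real work in Pollard-type proofs lies (typically an induction on $r$ or on $\min_i|A_i|$, together with a Kneser/Cauchy--Davenport input to handle the coprimality hypothesis, much as you anticipate in your last paragraph). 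Second, your inductive step on $k$ applies the $k-1$ case to ``the joint representation function recast as a multiplicity function,'' but the theorem as stated concerns sets, not weighted functions; you would need to formulate and prove a weighted version of the inequality for the induction to close, and that is a nontrivial strengthening rather than a routine bookkeeping step.
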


To gain an appreciation of Pollard's theorem, consider the special case $m=p$ a prime, $k=2$ and $r=1$.
When $m$ is prime, the hypothesis in Pollard's theorem is automatically satisfied.  Now $S(A_1,A_2,1)$
counts the number of elements in the sumset $A_1+A_2$, and Pollard's theorem gives that this
cardinality is smallest when $A_1$ and $A_2$ are intervals.  It thus follows that $|A_1+A_2|
\ge \min( p, |A_1|+|A_2|-1)$, which is a fundamental result on set addition known as
the Cauchy-Davenport theorem (a result proved by Cauchy in 1813, and rediscovered by Davenport
in 1935).  Thus Pollard's theorem may be viewed as a
generalization of the Cauchy-Davenport  result.  There has also been extensive
work on extending the Cauchy-Davenport  theorem, leading up to Kemperman's very general
theorem \cite{Kemp}; see Serra \cite{Ser} for a recent survey.

For the general case of Pollard's theorem, consider for each natural number $\ell$
the set $S_\ell$ of those elements in ${\Bbb Z}/m{\Bbb Z}$ which can be expressed as $a_1+\ldots+a_k$
in at least $\ell$ ways.  Then $S(A_1,A_2,\ldots, A_k,r)$ equals the
sum of the cardinalities of $S_\ell$ for all $1\le \ell \le r$.

\begin{corollary} With notation as in Pollard's theorem
$$
\max_{x} n(x,A_1,\ldots,A_k) \le \max_{x} n(x,A_1^{\prime},\ldots,A_k^{\prime}).
$$
\end{corollary}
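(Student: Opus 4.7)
The plan is to apply Pollard's theorem with the threshold parameter $r$ set equal to $M' := \max_x n(x, A_1', \ldots, A_k')$. This choice is natural because it is exactly the value at which the truncation $\min(r, n(x, A_1',\ldots,A_k'))$ becomes vacuous on the primed side, yielding a clean identity for $S(A_1', \ldots, A_k', r)$ that can be compared with the unprimed side.

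First I would record the trivial identity $\sum_{x} n(x, B_1, \ldots, B_k) = |B_1|\cdots|B_k|$ for any subsets $B_i$ of $\Z/m\Z$, which just counts tuples in $B_1 \times \cdots \times B_k$ grouped by their sum. Since $|A_i| = |A_i'|$ by hypothesis, the two products agree. Taking $r = M'$, the primed side evaluates to $S(A_1', \ldots, A_k', r) = \sum_x n(x, A_1',\ldots,A_k') = |A_1'|\cdots|A_k'|$, because $\min(r, n(x, A_1',\ldots,A_k')) = n(x, A_1',\ldots,A_k')$ for every $x$. On the unprimed side, the trivial bound gives $S(A_1, \ldots, A_k, r) \le \sum_x n(x, A_1,\ldots,A_k) = |A_1|\cdots|A_k|$. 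Combining this with Pollard's conclusion $S(A_1,\ldots,A_k,r) \ge S(A_1',\ldots,A_k',r)$ forces the unprimed inequality to be an equality.

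Finally, since $\min(r, n(x, A_1,\ldots,A_k)) \le n(x, A_1,\ldots,A_k)$ termwise, the equality $S(A_1,\ldots,A_k,r) = \sum_x n(x,A_1,\ldots,A_k)$ can only hold if no truncation actually occurs, i.e.\ $n(x, A_1,\ldots,A_k) \le r$ for every $x$. Taking the maximum gives $\max_x n(x, A_1,\ldots,A_k) \le r = \max_x n(x, A_1',\ldots,A_k')$, which is the corollary. The main subtlety, in my view, is picking the correct $r$: too small and Pollard's inequality says nothing about the maxima, too large and it becomes trivial. Once $r = M'$ is chosen the rest is just bookkeeping between Pollard's inequality and the conservation identity $\sum_x n(x, B_1,\ldots,B_k) = \prod_i |B_i|$.
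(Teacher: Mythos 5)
Your proof is correct and uses exactly the same ingredients as the paper's: choosing $r=\max_x n(x,A_1',\ldots,A_k')$, the conservation identity $\sum_x n(x,B_1,\ldots,B_k)=\prod_i|B_i|$ with $|A_i|=|A_i'|$, and the observation that any truncation would make the unprimed sum strictly smaller. The paper phrases this as a proof by contradiction while you argue directly via a forced equality, but this is only a cosmetic difference.
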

\begin{proof}  Suppose the corollary does not hold, and take $r = \max_x n(x, A_1^{\prime},\ldots,A_k^{\prime})$ in
Pollard's theorem.  Note that
$$
S(A_1^{\prime},\ldots, A_k^{\prime},r) = \sum_{x} n(x,A_1^{\prime}, \ldots, A_k^{\prime}) = |A_1^{\prime}|\cdots |A_k^{\prime}|.
$$
On the other hand, since by assumption $r< n(x,A_1,\ldots, A_k)$ for some $x$,
$$
S(A_1,\ldots, A_k,r) = \sum_x \max(r, n(x,A_1,\ldots,A_k))
< \sum_x n(x,A_1,\ldots, A_k) = |A_1| \cdots |A_k|.
$$
But this contradicts Pollard's theorem, proving the Corollary.
\end{proof}
\begin{proof}[Proof of Proposition \ref{prop:sr}]
Since $A_1$, $A_2$ and $A_3$ consist of coset representatives for $p (\cg{p^2})$ in $\cg{p^2}$,
the hypothesis in Pollard's theorem is satisfied.  Now take $A_1^{\prime} = A_2^{\prime}=A_3^{\prime} =I$
where $I$ is the interval of length $p$ centered around the origin.  A simple calculation gives that
$$
\max_x n(x, I, I, I) = n(0,I, I, I) = \frac{3p^2+1}{4}.
$$
By Corollary 3.4 it follows that $n(0,A_1,A_2,-A_3)$ is at most $(3p^2+1)/4$.  Since $n(0,A_1,A_2,-A_3)$
precisely counts the number of solutions to $a_1+a_2=a_3$, the Proposition follows.
\end{proof}

%Let $I$ be the interval centered around the origin with length $p$.
%In this proof, to simplify notation write
%\[ n(x)=n(x;A_1,A_2,-A_3),\quad S(r)=S(A_1,A_2,-A_3,r),\]
%\[ n'(x)=n(x;I,I,I),\quad S'(r)=S(I,I,I,r). \]
%By Pollard's theorem
%\begin{equation}\label{sr1}
%S(r)\geq S'(r).
%\end{equation}

%Sort the sequence $n(x)$ ($x\in\cg{p^2}$) in descending order:
%\[ n(x_1)\geq n(x_2)\geq\dots\geq n(x_{p^2}), \]
%where $x_1,x_2,\dots,x_{p^2}$ is a permutation of $\cg{p^2}$.
%Similarly, sort the sequence $n'(y)$ ($y\in\cg{p^2}$) in descending
%order:
%\[ n'(y_1)\geq n'(y_2)\geq\dots\geq n'(y_{p^2}). \]
%It is easy to see that $y_1=0$.

%It suffices to prove that $n(x_1)\leq n'(y_1)$, since this will
%imply that
%\[ n(0)\leq n(x_1)\leq n'(y_1)=n'(0), \]
%as desired (note that $n(0)$ is exactly the number of solutions to
%$a_1+a_2=a_3$ with $a_i\in A_i$, and an easy calculation shows that
%$n'(0)=(3p^2+1)/4$).

%To this end, assume for the sake of contradiction that
%$n(x_1)>n'(y_1)$. Take an integer $r$ with $n'(y_1)\leq r<n(x_1)$,
%and consider $S(r)$ and $S'(r)$. Then
%\[ S(r)\leq r+\sum_{i=2}^{p^2}n(x_i)=\sum_{i=1}^{p^2}n(x_i)+(r-n(x_1))<\sum_{i=1}^{p^2}n'(y_i)=S'(r). \]
%This used
%\[ \sum_{i=1}^{p^2}n(x_i)=|A_1||A_2||A_3|=|I|^3=\sum_{i=1}^{p^2}n'(y_i). \]
%But this contradicts \eqref{sr1}.
%\end{proof}

%%%%%%%%%%%%%%%%%%%%%%%%%%%%%%%%%%%%%%%%%%%%%%%%%%%%%%%%%%%%%
%%%%%%%%%%%%%%%%%%%%%%%%%% 7/9 result %%%%%%%%%%%%%%%%%%%%%%%
%%%%%%%%%%%%%%%%%%%%%%%%%%%%%%%%%%%%%%%%%%%%%%%%%%%%%%%%%%%%%

\section{Carries and Approximate Homomorphisms}\label{sec:7/9}

This section proves Theorem \ref{thm:7/9} and gives an introduction
to computer scientists' use of approximate homomorphisms in
cryptography and for verifying program correctness.

\begin{definition}[Approximate homomorphisms]
Let $G_1,G_2$ be arbitrary groups with $G_1$ finite. Let
$\epsilon>0$. A function $f:G_1\rightarrow G_2$ is an
$\epsilon$-homomorphism if, picking $g,g'$ independently and
uniformly in $G_1$,
\[ \prob_{g,g'\in G_1}\{f(g)f(g')=f(gg')\}\geq\epsilon. \]
\end{definition}

Checking if a given program or black box is a homomorphism occurs in
cryptography (e.g. checking a random number generator) and in
program checking (e.g. does this matrix multiplication package
really work). Here is a brief description.

\textit{Cryptography.} Despite recent advances, many cryptography
schemes in active use still proceed by taking a message, given as a string of
letters in a finite field $x_1x_2\cdots x_N$, adding \textit{noise}
$\epsilon_1\epsilon_2\cdots\epsilon_N$ to each coordinate, and
sending $x_i+\epsilon_i=y_i$. A receiver in possession of the recipe
for the noise $\epsilon_i$ decodes via $y_i-\epsilon_i=x_i$. The
noise is usually generated by a pseudorandom generator. For example,
if the field is $\Z/p\Z$, the generator might be
$\epsilon_{i+1}=a\epsilon_i+b\pmod p$. Another scheme has the field
$\Z/2\Z$, breaks the message into blocks: $X_1=(x_1\cdots x_{256})$,
$X_2=(x_{257}\cdots x_{512}),\cdots$, and adds vectors of noise
$\tilde{\epsilon}_1,\tilde{\epsilon}_2,\cdots$. These
$\tilde{\epsilon}_i$ are often generated by a simple scheme such as
$\tilde{\epsilon}_{i+1}=A\tilde{\epsilon}_i$ with $A$ a fixed
$256\times 256$ matrix. Someone interested in checking this
generator has to determine $(a,b)$ (or $A$) and the initial seed. A
first task is to decide if such a linear scheme is in use. This
entails testing if the output is a homomorphism! For background and
a fascinating success story in online poker, see \cite{poker}.

\textit{Program checking.} A host of computer scientists have
developed a sophisticated suite of programs for testing if programs
designed to do standard numerical tasks are doing their job. A
readable entry to this literature is \cite{blr} and their
references. As an example, consider a program $P$ to multiply two
$n\times n$ matrices $A,B$ with elements in a finite field. Given
$A,B$, the program outputs $P(A,B)$. A complete test is out of the
question. A test which proves correctness with high probability is
suggested in \cite{blr}. Given $A,B$, form random uniform matrices
$A_1,B_1$. Set $A_2=A-A_1$, $B_2=B-B_1$, and
$C=P(A_1,B_1)+P(A_1,B_2)+P(A_2,B_1)+P(A_2,B_2)$. If the program is
working then $C=P(A,B)$ by simple algebra. The tools of approximate
homomorphisms are used to show this test (amplified by repetitions)
gives an efficient check which works with arbitrarily high
probability. While the examples above involve homomorphisms between
abelian groups $(\Z/p\Z)^{n^2}$, the theorists developed their tools
for general groups. One of their theorems turns out to be just what
we need to prove Theorem \ref{thm:7/9}.

The following theorem, due to Ben-Or, Coppersmith, Luby, and
Rubinfeld \cite{approx_hom}, says that for $\epsilon>7/9$, an
$\epsilon$-approximate homomorphism must coincide with a genuine
homomorphism on a large subset of $G_1$.

\begin{theorem}[Structure theorem for approximate homomorphisms]\label{thm:approx_hom}
Let $G_1,G_2$ be arbitrary groups with $G_1$ finite. Suppose that
$f:G_1\rightarrow G_2$ is an $\epsilon$-approximate homomorphism for
some $\epsilon>7/9$. Then there is a genuine homomorphism
$\phi:G_1\rightarrow G_2$ such that $\prob_{g\in
G_1}(f(g)\neq\phi(g))\leq\tau$, where $\tau=\tau(\epsilon)$ is the smaller root of
the equation $3x-6x^2=1-\epsilon$.
\end{theorem}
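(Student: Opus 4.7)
The plan is to build $\phi$ by a majority vote and then establish, in sequence, that $\phi$ is well-defined with a comfortable margin, that $\phi$ is a genuine homomorphism, and finally that $\phi$ is close to $f$. For each $g \in G_1$ and $h \in G_2$ set
\[
\mu_h(g) = \prob_{y \in G_1}\bigl[f(gy) f(y)^{-1} = h\bigr],
\]
choose $\phi(g) \in G_2$ to maximize $\mu_h(g)$, and write $\mu(g) = \mu_{\phi(g)}(g)$. The argument has three stages, and I expect the second to be the main obstacle.

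First I would establish a strong lower bound on $\mu(g)$ for every $g$. Substituting $y_2 = y_1 z$ with $y_1, z$ independent uniform rewrites the collision probability
\[
\sum_h \mu_h(g)^2 = \prob_{y_1, y_2}\bigl[f(gy_1)f(y_1)^{-1} = f(gy_2)f(y_2)^{-1}\bigr]
\]
as a quantity bounded below by the probability that both $f(gy_1 \cdot z) = f(gy_1)f(z)$ and $f(y_1 \cdot z) = f(y_1)f(z)$ hold. Because $gy_1$, $y_1$, and $z$ are each uniform, each identity holds with probability at least $\epsilon$, so the union bound gives a collision probability of at least $2\epsilon - 1$. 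Combined with the elementary inequality $\sum_h \mu_h(g)^2 \leq \mu(g)^2 + (1 - \mu(g))^2$, this yields $\mu(g)(1 - \mu(g)) \leq 1 - \epsilon$, which together with the coarse consequence $\mu(g) \geq 2\epsilon - 1 > 5/9$ forces $\mu(g) \geq 1 - \tau_0$, where $\tau_0$ is the smaller root of $x(1-x) = 1 - \epsilon$. For $\epsilon > 7/9$ one has $\tau_0 < 1/3$, so $\mu(g) > 2/3$ uniformly.

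The hard part will be showing that $\phi$ is a homomorphism; this is precisely where the threshold $\epsilon > 7/9$ is spent. I plan to exploit the factorization
\[
f(g_1 g_2 y) f(y)^{-1} = \bigl[f(g_1 g_2 y) f(g_2 y)^{-1}\bigr]\bigl[f(g_2 y) f(y)^{-1}\bigr].
\]
For uniform $y$, the second bracket equals $\phi(g_2)$ on a set of density $\mu(g_2)$, and since $g_2 y$ is uniform the first bracket equals $\phi(g_1)$ on a set of density $\mu(g_1)$. By the union bound both hold on a set of density at least $\mu(g_1) + \mu(g_2) - 1$, on which the whole product equals $\phi(g_1)\phi(g_2)$. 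Meanwhile $\phi(g_1 g_2)$ appears on a set of density $\mu(g_1 g_2)$. If $\phi(g_1 g_2) \neq \phi(g_1)\phi(g_2)$ these two sets would be disjoint, forcing $\mu(g_1) + \mu(g_2) + \mu(g_1 g_2) \leq 2$ and contradicting $\mu(g) > 2/3$. Hence $\phi$ is a homomorphism. The fact that three numbers strictly greater than $2/3$ sum to more than $2$ is what breaks down precisely at $\epsilon = 7/9$.

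Finally I would bound $\alpha := \prob_g[f(g) \neq \phi(g)]$ by $\tau$. Let $B = \{g : f(g) \neq \phi(g)\}$. Because $\phi$ is a homomorphism, whenever none of $g, g', gg'$ lies in $B$ the identity $f(g)f(g') = f(gg')$ holds, and whenever exactly one does, it fails. Using that any two of $g, g', gg'$ are independent and uniform, inclusion–exclusion gives
\[
1 - \epsilon \geq \prob\bigl[\text{exactly one of } g, g', gg' \in B\bigr] = 3\alpha - 6\alpha^2 + 3\prob[g, g', gg' \in B] \geq 3\alpha - 6\alpha^2,
\]
which confines $\alpha$ to the complement of the open interval between the two roots of $3x - 6x^2 = 1 - \epsilon$. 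A separate crude estimate, obtained by splitting the identity $\mathbf{E}_g \prob_y[f(gy)f(y)^{-1} = f(g)] \geq \epsilon$ according to whether $f(g) = \phi(g)$ and using $\mu(g) > 2/3$ when it is not, yields $\alpha \leq 3(1 - \epsilon)/2$; for $\epsilon > 7/9$ this sits strictly below the larger root, leaving $\alpha \leq \tau$ as required.
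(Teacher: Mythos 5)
Your proposal is correct: the majority-vote definition of $\phi$, the collision/union-bound argument giving $\mu(g)>2/3$, the three-way disjointness argument for the homomorphism property, and the inclusion--exclusion step yielding $3\alpha-6\alpha^2\le 1-\epsilon$ (with the crude bound $\alpha\le \tfrac32(1-\epsilon)$ to select the smaller root) all check out. This is exactly the construction the paper sketches and attributes to Ben-Or--Coppersmith--Luby--Rubinfeld; the paper itself gives no proof beyond that one-sentence sketch, so your argument simply supplies the details of the same approach.
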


Note that $\tau(\epsilon)$ equals $(3-\sqrt{24\epsilon -15})/12$, and so $\tau(\epsilon) < (3-\sqrt{11/3})/12 = 0.0904\ldots$
when  $\epsilon >7/9$.    Both the range $\epsilon >7/9$ and the parameter $\tau(\epsilon)$
are sharp.   The
genuine homomorphism $\phi$ in the statement is constructed by
taking $\phi(g)$ to be the most frequent value of $f(gg')f(g')^{-1}$ over
all $g'\in G_1$. Under the stated assumptions, it can be shown that
this most frequent value is well-defined, the resulting map $\phi$ is a
genuine homomorphism, and it well approximates $f$.

\begin{proof}[Proof of Theorem \ref{thm:7/9}]   Since $H$ is a normal subgroup, the quotient $G/H$ forms
a group.  Consider now the map $f: G/H \to G$ that sends a coset $gH$ to its unique coset representative in $X$.
Given two cosets (along with their representatives in $X$), $gH = xH$ and $g^{\prime} H = x^{\prime} H$
note that $f(gH )f(g^{\prime}H) = f(gg^{\prime}H)$ if and only if $xx^{\prime}$ belongs to $X$.  In other words,
$f$ is a $C(X)$-approximate homomorphism.

Since $C(X)>7/9$ by hypothesis, Theorem \ref{thm:approx_hom} implies that
there is a genuine homomorphism $\phi: G/H \to G$ such that $f(gH) =\phi(gH)$ for
all but at most $\tau |G/H| < \frac 1{10} |G/H|$ cosets.   Let $K$ denote the image of the homomorphism $\phi$.
Thus $K$ is a subgroup of $G$ with $|K\cap X| \ge (1-\tau) |X| > \frac{9}{10}|X|=\frac{9}{10} |G/H|$.   By the
first isomorphism theorem $K$ is isomorphic to $(G/H)/\text{ker}(\phi)$
and therefore  the kernel of $\phi$ is trivial, and $|K|=|G/H|$.  If
$K$ contains an element $1\neq \ell \in H$, then for each $k\in K$ at most one
of $k$ or $k\ell$ can be in $X$; this would mean that $|K\cap X|\le |K|/2$ contradicting
our lower bound for $|K\cap X|$.  Thus $K\cap H=\{1\}$, and distinct elements of $K$ belong
to distinct cosets of $H$.  Therefore $K$ consists of  a complete set of
coset representatives for $H$ in $G$,
and we have $G=HK$, as desired.
\end{proof}

\section{An argument of Fournier}\label{sec:59/60}

In this section we study a problem that is a little more general than the
carries question.   Let $A$ be a finite set in a group $G$, and set
(in analogy with our earlier definition)
\[
C(A)=\frac{|\{a_1,a_2\in A:a_1a_2\in A\}|}{|A|^2}.
\]
The following result, which is established following an argument of Fournier,
 shows that if $C(A)$ is close to $1$, then $A$ is almost a subgroup.

\begin{theorem}\label{Fournier}  For a finite set $A$ in a group $G$, if $C(A) \ge 1-\delta$
for some $\delta \le 1/60$, then there exists a subgroup $K$ of $G$
such that
$$
|K| \le 10|A|/9 , \qquad \text{and} \qquad |A\cap K| \ge (1-5\delta)
|A|.
$$
 \end{theorem}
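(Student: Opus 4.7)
My plan is to pass from the combinatorial hypothesis $C(A) \ge 1-\delta$ to a subgroup $K$ of $G$ in three stages: averaging to extract a large ``regular'' subset of $A$, construction of a candidate subgroup $K$, and concentration of $A$ inside it.

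\emph{Averaging.} Introduce the representation function $r(a) = |\{(x, y) \in A \times A : xy = a\}|$. By hypothesis $\sum_{a \in A} r(a) = C(A)|A|^2 \ge (1-\delta)|A|^2$, while $\sum_{a \in G} r(a) = |A|^2$ and each $r(a) \le |A|$. A Markov-type inequality then produces, for any threshold $\eta > 0$, a subset $A^* = \{a \in A : r(a) \ge (1-\eta)|A|\}$ with $|A \setminus A^*| \le (\delta/\eta)|A|$; equivalently, for $a \in A^*$ one has $|aA \cap A| \ge (1-\eta)|A|$, so left-translation by $a$ preserves most of $A$.

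\emph{Candidate subgroup.} Consider the approximate stabilizer $K = \{g \in G : |gA \cap A| \ge \alpha|A|\}$ for an $\alpha$ to be chosen. The double count $\sum_{g \in G} |gA \cap A| = |A|^2$ bounds $|K| \le |A|/\alpha$, so $\alpha = 9/10$ forces $|K| \le (10/9)|A|$. The set $K$ is symmetric under inversion and contains the identity. The crucial and hardest step is to upgrade $K$ from an approximate to a genuine subgroup: naive inclusion–exclusion gives only $|gh A \cap A| \ge (2\alpha - 1)|A|$ for $g, h \in K$, which is insufficient. Following a Fournier-type argument, I would count quadruples $(x_1, y_1, x_2, y_2) \in A^4$ with $x_1 y_1 = g$, $x_2 y_2 = h$, and $y_1 x_2 \in A$; a double application of $C(A) \ge 1-\delta$ shows that most such quadruples also satisfy $(y_1 x_2) y_2 \in A$, supplying enough representations of $gh$ in $A \cdot A$ to conclude $gh \in K$. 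The sharpness $\delta \le 1/60$ is precisely what makes this closure argument succeed, and it is the principal obstacle in the proof.

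\emph{Concentration.} With $K$ now a subgroup, partition $G$ into cosets of $K$ and let $c_i$ denote the number of elements of $A$ in the $i$-th coset, so $\sum_i c_i = |A|$. A pair $(x, y) \in A^2$ lying in cosets $i$ and $j$ has $xy$ in the product coset $ij$, so $xy \in A$ forces $A$ to intersect the coset $ij$. A direct count yields
\[
(1-\delta)|A|^2 \le \sum_{i, j} \min(c_i, c_j) \, c_{ij},
\]
which, combined with $|A|^2 = \sum_{i,j} c_i c_j$, forces the distribution $(c_i)$ to concentrate on a single dominant coset; the product structure then shows this coset must be $K$ itself, since squaring it must again land on a dominant coset. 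Quantitatively this gives $c_K \ge (1-5\delta)|A|$, so $|A \cap K| \ge (1-5\delta)|A|$, as required.
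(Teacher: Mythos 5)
You have correctly identified the right object: your approximate stabilizer $K=\{g: |gA\cap A|\ge \alpha |A|\}$ is exactly the paper's symmetry set $\mathrm{Sym}_{\alpha}(A)$, and the bound $|K|\le |A|/\alpha$ from the double count is right. But the two steps that actually constitute the theorem are not proved. The closure step, which you yourself flag as ``the principal obstacle,'' is left as a gesture, and the gesture points in a direction that does not obviously close: counting quadruples with $x_1y_1=g$, $x_2y_2=h$ produces many representations of $gh$ as an element of $A\cdot A$, but membership in your $K$ is the condition $|ghA\cap A|\ge \alpha|A|$, which is not implied by a large representation count (and, worse, $g\in K$ does not even guarantee $g$ has a single representation $x_1y_1$ with $x_1,y_1\in A$, so the quadruple count may be vacuous). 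The paper's mechanism is different and is worth knowing: one uses the sub-multiplicativity $\mathrm{Sym}_{1-\epsilon_1}(A)\cdot \mathrm{Sym}_{1-\epsilon_2}(A)\subset \mathrm{Sym}_{1-\epsilon_1-\epsilon_2}(A)$ together with a pigeonhole at several thresholds. With $\eta=1/20$, for any $x\in \mathrm{Sym}_{1-4\eta}(A)$ the two sets $\mathrm{Sym}_{1-\eta}(A)$ and $x\,\mathrm{Sym}_{1-\eta}(A)$ each have size at least $(1-\delta/\eta)|A|\ge \tfrac23|A|$ and both lie inside $\mathrm{Sym}_{1-5\eta}(A)$, which has size at most $|A|/(1-5\eta)=\tfrac43|A|$; hence they intersect, $x$ is a product of two elements of $\mathrm{Sym}_{1-\eta}(A)$, and so $x\in \mathrm{Sym}_{1-2\eta}(A)$. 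This forces $\mathrm{Sym}_{1-2\eta}(A)=\mathrm{Sym}_{1-4\eta}(A)$, which is then closed under products and is the desired group. This is precisely where $\delta\le 1/60$ is used ($2(1-20\delta)>4/3$), whereas in your sketch the role of $1/60$ is only asserted.

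Your concentration step also has a genuine error: you write that a pair in cosets $i$ and $j$ multiplies into ``the product coset $ij$,'' but $g_iK\cdot g_jK$ is a single coset only when $K$ is normal, which is not part of the hypothesis or the conclusion. Moreover the final bound $c_K\ge(1-5\delta)|A|$ is announced rather than derived. The paper obtains $|A\cap K|\ge(1-5\delta)|A|$ with no coset analysis at all, from the one-line counting lemma $C(A)|A|^2=\sum_{a_2\in A}|A\cap Aa_2|$, which yields $|A\cap \mathrm{Sym}_{1-\epsilon}(A)|\ge(1-\delta/\epsilon)|A|$, applied with $\epsilon=4\eta=1/5$. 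I would encourage you to redo the argument with the graded family $\mathrm{Sym}_{1-\epsilon}(A)$ as the organizing device; your averaging step is essentially this lemma in disguise, but applied to the wrong representation function ($r(a)$ counts factorizations of $a$, not the overlap $|Aa\cap A|$).
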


Let $A$ be any subset of $G$, and let $\epsilon$ be a real number in
$[0,1]$.  Define
$$
\text{Sym}_{1-\epsilon}(A) = \{ x \in G:  |A \cap Ax | \ge
(1-\epsilon)|A|\}.
$$
Since $|A\cap Ax| = |Ax^{-1} \cap A|$ the set
$\text{Sym}_{1-\epsilon}(A)$ is symmetric (that is, closed under
inverses). The following  monotonicity condition is clear:
$$
\text{Sym}_{1-\epsilon_1}(A) \subset \text{Sym}_{1-\epsilon_2}(A) \
\ \text{if } \ \ \epsilon_1 \le \epsilon_2.
$$
Observe further that if $x_1\in \text{Sym}_{1-\epsilon_1}(A)$ and
$x_2 \in \text{Sym}_{1-\epsilon_2}(A)$ then $x_1 x_2 $ lies in
$\text{Sym}_{1-\epsilon_1-\epsilon_2}(A)$.  To see this, note that
\begin{align*}
\#\{a\in A: ax_1x_2 \notin A\} &\le \#\{a \in A: \ ax_1 \notin A\} +
\#\{ a\in A: \ ax_1\in A, \ \ ax_1x_2\notin A\} \\
&\le \epsilon_1 |A| + \# \{b \in A: \ bx_2 \notin A\} \le
(\epsilon_1+\epsilon_2)|A|.
\end{align*}
The identity
$$
\sum_{x\in G} |A\cap Ax| =\sum_{x\in G} \sum_{\substack{ {a_1, a_2
\in A} \\ {a_1 =a_2 x}}} 1 = \sum_{a_1, a_2 \in A} \sum_{x=a_2^{-1}
a_1} 1  = |A|^2
$$
shows that
\begin{equation}
\label{Fournier1} |\text{Sym}_{1-\epsilon}(A)| \le |A|/(1-\epsilon).
\end{equation}

\begin{lemma}
\label{Fournier2}  Let $A$ be a subset of $G$ with $C(A) \ge
1-\delta$.  Then for any $\epsilon>\delta$ we have
$$
(1-\delta/\epsilon) |A| \le |A\cap \text{Sym}_{1-\epsilon}(A)|.
$$
\end{lemma}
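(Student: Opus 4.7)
The plan is to argue by a one-line Markov (first-moment) inequality applied to the hypothesis $C(A) \ge 1-\delta$, which I will reinterpret as a bound on the total number of ``bad pairs.''

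First, I would rephrase the hypothesis as follows: the number of pairs $(a_1,a_2)\in A\times A$ with $a_1a_2\notin A$ is at most $\delta|A|^2$. For each fixed $a\in A$, define $N(a) = |\{b\in A: ba\notin A\}|$; note that $|A\cap Aa|=|\{b\in A: ba\in A\}|$, so $N(a)=|A|-|A\cap Aa|$. Summing over $a\in A$ gives $\sum_{a\in A} N(a)\le \delta |A|^2$.

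Next, I would observe that an element $a\in A$ fails to lie in $\text{Sym}_{1-\epsilon}(A)$ precisely when $|A\cap Aa| < (1-\epsilon)|A|$, equivalently when $N(a) > \epsilon|A|$. Applying Markov's inequality to the nonnegative quantities $N(a)$ shows that the number of $a\in A$ with $N(a)>\epsilon|A|$ is strictly less than $\delta|A|/\epsilon$. Subtracting from $|A|$ yields
\[
|A\cap \text{Sym}_{1-\epsilon}(A)| > (1-\delta/\epsilon)|A|,
\]
which is (slightly stronger than) the claim.

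There is no real obstacle: the entire argument is a single averaging step. The only points requiring attention are (a) keeping straight the direction of multiplication so that the counting of $N(a)$ matches the definition of $\text{Sym}_{1-\epsilon}(A)$ in terms of the right translate $Aa$, and (b) reconciling the strict-versus-weak inequality in the definition of $\text{Sym}_{1-\epsilon}(A)$, which only affects whether the final bound is strict.
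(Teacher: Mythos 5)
Your proof is correct and is essentially the paper's argument: the paper computes $C(A)|A|^2=\sum_{a\in A}|A\cap Aa|$ and splits the sum according to whether $a\in \text{Sym}_{1-\epsilon}(A)$, which is exactly your Markov/first-moment step applied to the complementary counts $N(a)=|A|-|A\cap Aa|$. The multiplication direction and the strict-inequality bookkeeping are handled correctly, so nothing further is needed.
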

\begin{proof}   Note that
$$
C(A) |A|^2 = \# \{a_1 a_2 = a_3\} = \sum_{a_2\in A} |A\cap Aa_2|.
$$
Now $|A\cap Aa_2|\le |A|$ for all $a_2\in A$, and $|A\cap Aa_2| \le
(1-\epsilon)|A|$ for $a_2$ lying in $A$ but not in
$\text{Sym}_{1-\epsilon}(A)$. Thus
$$
(1-\delta)|A|^2 \le |A| |A\cap \text{Sym}_{1-\epsilon}(A)| +
(1-\epsilon)|A| (|A|-|A\cap \text{Sym}_{1-\epsilon}(A)|),
$$
and the lemma follows upon rearranging.
\end{proof}

\begin{proof}[Proof of Theorem \ref{Fournier}]  With $\eta = 1/20$ we shall show that
$\text{Sym}_{1-2\eta}(A)$ equals $\text{Sym}_{1-4\eta}(A)$. Then
$\text{Sym}_{1-2\eta}(A) \times \text{Sym}_{1-2\eta}(A) \subset
\text{Sym}_{1-4\eta}(A) = \text{Sym}_{1-2\eta}(A)$, and it follows
that $\text{Sym}_{1-2\eta}(A) = \text{Sym}_{1-4\eta}(A)$ is a group.
This is the group $K$ of the Theorem. By \eqref{Fournier1} it
satisfies $|K|\le 10|A|/9$, and by Lemma \ref{Fournier2} we have
$|A\cap K| \ge (1-5\delta) |A|$; thus $K$ has the properties claimed
in the Theorem.

Since $\text{Sym}_{1-2\eta}(A) \subset \text{Sym}_{1-4\eta}(A)$, it
remains only to show the reverse inclusion. Consider any $x\in
\text{Sym}_{1-4\eta}(A)$.   The sets $\text{Sym}_{1-\eta}(A)$ and
$x\text{Sym}_{1-\eta}(A)$  both have cardinality at least
$|A|(1-\delta/\eta)$ by Lemma \ref{Fournier2}, and both are
contained in the set $\text{Sym}_{1-5\eta}(A)$ of cardinality at
most $|A|/(1-5\eta)$ by \eqref{Fournier1}.  Since $\delta <1/60$, we
deduce that $\text{Sym}_{1-\eta}(A)$ and $x\text{Sym}_{1-\eta}(A)$
have a non-empty intersection, and therefore $x$ may be written as
the product of two elements from $\text{Sym}_{1-\eta}(A)$.  Hence
$x$ must lie in $\text{Sym}_{1-2\eta}(A)$, completing the proof.
\end{proof}

\section{Characterizing the traditional choice of
digits}\label{sec:twocarries}

This section returns to the original setting of the cyclic groups
$p(\Z/p^2\Z) \subset\Z/p^2\Z$. The usual choice of coset
representatives $\{0,1,2,\cdots,p-1\}$ results in two types of
carries $\{0,p\}$ (Table 1). Balanced coset representatives (Table
2) need three types of carries $\{0,p,-p\}$. Random coset
representatives almost surely need all $p$ carries. The results
below show that two types of carries characterize the usual choice
of coset representatives. They use some basic tools of additive
combinatorics due to Freiman and make for a nice introduction to
these tools in a natural problem. At present the argument relies
on $p$ being prime, and it would be interesting to extend it to other groups.

\begin{theorem}\label{prop:char} Let $p$ be a prime, and let $A\subset\cg{p^2}$
be a set of coset representatives for $p(\cg{p^2})\subset\cg{p^2}$.
Suppose that the carries matrix associated to $A$ contains only two
distinct entries. Then there exist $c\in (\cg{p^2})^{\times}$ and
$d\in p(\cg{p^2})$ such that after dilating $A$ by $c$ and
translating by $d$ we have either $cA+d=\{0,1,\dots,p-1\}$ or
$cA+d=\{1,2,\cdots,p\}$.
\end{theorem}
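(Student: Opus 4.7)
My plan is to translate the two-carries hypothesis into a small-doubling statement, invoke a Freiman-type structural theorem to force $A$ to be an arithmetic progression of length $p$ in $\cg{p^2}$, and then identify the starting point by a direct carry computation.

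I would begin by unpacking the hypothesis. Writing $H = p(\cg{p^2})$, the two-carries assumption yields a nonzero $c_0 \in H$ with $A + A \subseteq A \cup (A + c_0)$. Since $A$ is a transversal for $H$ and $c_0 \ne 0$, the sets $A$ and $A + c_0$ are disjoint, so $|A + A| \le 2|A| = 2p$. Because $|A| = p$ is far smaller than $|\cg{p^2}| = p^2$, a rectification argument yields a Freiman $2$-isomorphism $\phi : A \to A' \subset \Z$ with $|A'| = p$ and $|A' + A'| \le 2p$. Freiman's $3k - 4$ theorem in $\Z$ then forces $A'$ to lie in an arithmetic progression of length at most $p + 1$; pulling this back, $A$ lies in $\{b + \ell \tilde d : 0 \le \ell \le p\} \subset \cg{p^2}$. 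The transversal hypothesis now implies that $\tilde d$ is a unit in $\cg{p^2}$ (otherwise $A$ would lie in a single $H$-coset), and that $A$ cannot omit an interior term of the $(p+1)$-term progression: if it did, the endpoints $b$ and $b + p\tilde d$ would coincide modulo $H$, violating the bijectivity of the projection $\pi|_A$. Hence $A = \{b + \ell \tilde d : 0 \le \ell \le p - 1\}$ is a full $p$-term arithmetic progression with unit common difference.

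Next I take $c := \tilde d^{-1}$ and write $cb = i_0 + p j_0$ with $i_0, j_0 \in \{0, 1, \ldots, p - 1\}$, so that $cA = \{i_0 + p j_0 + \ell : 0 \le \ell \le p - 1\}$ is a shifted length-$p$ interval in $\cg{p^2}$. A short direct calculation (treating the elements as consecutive integers and tracking the coefficient of $p$ after reduction modulo $p^2$) shows that the carries take values $\{p j_0, p(j_0 + 1)\}$ when $i_0 \in \{0, 1\}$, whereas an additional carry $p(j_0 + 2) \bmod p^2$ arises as soon as $i_0 \ge 2$; the latter is witnessed, for example, by summing the largest element $i_0 + pj_0 + p - 1$ with itself. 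Since $p \ge 3$, the cosets $p j_0$, $p(j_0 + 1)$, $p(j_0 + 2)$ are pairwise distinct in $\cg{p^2}$, so the two-carries hypothesis forces $i_0 \in \{0, 1\}$. Translating by $d := -p j_0 \in H$ then yields $cA + d = \{0, 1, \ldots, p - 1\}$ (when $i_0 = 0$) or $cA + d = \{1, 2, \ldots, p\}$ (when $i_0 = 1$), which is the required conclusion.

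The main obstacle is the Freiman step: standard rectification and $3k-4$ arguments are cleanest in $\cg{p}$ for prime $p$, whereas here the ambient group $\cg{p^2}$ has a nontrivial proper subgroup $H$. One must either verify that the density condition $|A + A| \le 2p$ relative to modulus $p^2$ suffices for rectifiability in this composite modulus, or use the transversal structure of $A$ to obtain the AP conclusion directly, thereby ruling out pathological ``AP-with-a-gap'' configurations that would otherwise be admitted by the small-doubling bound alone.
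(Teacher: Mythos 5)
Your overall architecture matches the paper's: pass from the two-carries hypothesis to $|A+A|\le 2|A|$, rectify $A$ to a subset of $\Z$, apply Freiman's $3k-3$/$3k-4$ theorem to trap $A$ in a $(p+1)$-term progression, use the transversal property to show the omitted term is an endpoint and the common difference is a unit, and finish with an explicit three-carries computation ruling out $i_0\ge 2$. That endgame is correct and is essentially identical to the paper's (the paper exhibits the carries $u-i$, $p+u-i$, $2p+u-i$ when $u\equiv i\pmod p$ with $2\le i\le p-1$).

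However, the step you flag as ``the main obstacle'' is a genuine gap, and it is precisely where the paper does its real work. The assertion that $|A|=p\ll p^2$ together with $|A+A|\le 2p$ ``yields a Freiman $2$-isomorphism onto a subset of $\Z$'' is not a citation to a standard theorem: the Bilu--Lev--Ruzsa and Green--Ruzsa rectification results are stated for $\Z/p\Z$ with $p$ prime, and in a composite-modulus group small doubling alone does not imply rectifiability (a union of two cosets of $p(\cg{p^2})$ has doubling $\le 2$ and is not rectifiable). The paper's Proposition 6.3 fills this in with a two-stage argument: a Fourier-analytic step (Lemma 6.6 plus Lev's arc lemma) produces a large nonzero Fourier coefficient $\hat A(r)$, and --- crucially --- the transversal hypothesis forces $\hat A(r)=0$ whenever $r$ is a nonzero multiple of $p$, so the large frequency is invertible and can be dilated to $1$, concentrating most of $A$ in an arc of length $\pi$; then a combinatorial bootstrap (Proposition 6.4, itself using Freiman's $3k-3$ theorem on the rectified part) upgrades ``a large subset of $A$ lies in $(-p^2/4,p^2/4]$'' to ``all of $A$ does,'' with small primes handled separately. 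Your proposal correctly identifies that the transversal structure must enter here, but without an argument of this kind the proof is incomplete. A minor additional imprecision: the two carries $t_1,t_2$ need not include $0$, so the containment is $A+A\subseteq (A+t_1)\cup(A+t_2)$ rather than $A+A\subseteq A\cup(A+c_0)$; this does not affect the doubling bound.
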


 If the carries matrix for $A$ contains only two distinct entries then
 the sumset $A+A$ is contained in two translates of the set $A$ and thus $|A+A|\le 2|A|$.
Pollard's theorem tells us that $|A+A|\ge 2|A|-1$ (this is essentially the
Cauchy-Davenport theorem, as discussed in Section 3), and so our situation
is very close to the minimal possible doubling of a set.  Note that a typical random
set $A$ might be expected to have sumset $A+A$ as large as $|A|^2$ in size,
and one would expect sets with small doubling to be very structured and far from random.
This is the content of a celebrated theorem of Freiman, and we give a sample such
result in the case of subsets of the integers.

\begin{nonumtheorem}[Freiman's $3k-3$ theorem]\label{thm:fz}
Let $A\subset\Z$ with $|A|=k\geq 3$. If  $|A+A|=2k-1+b\leq 3k-4$
then $A$ is a subset of an arithmetic progression of length $k+b$.
\end{nonumtheorem}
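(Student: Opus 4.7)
The plan is to reduce to a normalized version of $A$, translate the goal into a quantitative lower bound on $|A+A|$, and then build enough elements of $A+A$ to meet it. First, I would translate $A$ so that $\min A=0$, and then replace $A$ by $A/d$, where $d=\gcd(a_2,\ldots,a_k)$ with $a_i$ the ordered elements of $A$; both operations preserve $|A|$ and $|A+A|$, and a length-$(k+b)$ AP containing the normalized set pulls back to one containing the original $A$. After normalization we may assume $A=\{0=a_1<a_2<\cdots<a_k=n\}$ with $\gcd(a_2,\ldots,a_k)=1$, and the conclusion becomes simply $n\le k-1+b$: for then $A\subset\{0,1,\ldots,k-1+b\}$, which is an AP of length $k+b$. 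Using $|A+A|=2k-1+b$, this is equivalent to showing $|A+A|\ge n+k$.

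The baseline lower bound comes from two obvious chains inside $A+A$: $0+A\subset[0,n]$ and $n+A\subset[n,2n]$, whose union has exactly $2k-1$ elements (they meet only at $\{n\}$). So the task reduces to producing $n-k+1$ additional elements of $A+A$ outside $A\cup(n+A)$. Writing $d_i=a_{i+1}-a_i$, we have $\sum_{i=1}^{k-1}d_i=n$ and $\sum_{i=1}^{k-1}(d_i-1)=n-k+1$ --- exactly the number of ``gaps'' in $[0,n]\setminus A$. Heuristically, each such gap should be witnessed by a nontrivial sum that falls into a gap of $A\cup(n+A)$.

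Making this heuristic precise is the main combinatorial step and the expected obstacle. One concrete approach is to introduce an auxiliary chain $a_2+a_1<a_2+a_2<\cdots<a_2+a_k$ of length $k$ in $A+A$, and cross-reference term by term with $A\cup(n+A)$: whenever $a_2+a_j$ is neither of the form $a_\ell$ nor of the form $n+a_\ell$, it contributes a new element. A cleaner alternative is induction on $k$: the cases $k=3,4$ can be verified directly, and in the inductive step one removes $a_k$ to form $A'=A\setminus\{a_k\}$, applies the inductive hypothesis to $A'$ when its doubling is still small, and handles the remaining cases by a direct argument that exploits the structure forced by $|A+A|\le 3k-4$ (e.g.\ $a_{k-1}$ must be close to $n$, or $d_1=1$, etc.). The delicate point is controlling double-counting: coincidences between the auxiliary sums and the canonical chains are exactly what consume the ``small-doubling budget,'' so the hypothesis $|A+A|\le 3k-4$ must be invoked carefully at each step to prevent losing more new elements than we gain.
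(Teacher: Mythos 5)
First, a point of reference: the paper does not prove this statement at all --- it is quoted as a known theorem of Freiman (the standard reference is Nathanson's book, which the paper cites for the companion ``$2.4$ theorem'') and is used as a black box in Section 6. So the only question is whether your argument stands on its own, and at present it does not.

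Your reduction is correct and is the standard opening move: after translating so that $\min A=0$ and dividing by $\gcd(a_2,\ldots,a_k)$, one has $A=\{0=a_1<\cdots<a_k=n\}$ with $\gcd=1$, and the conclusion is equivalent to $n\le k-1+b$, i.e.\ to the inequality $|A+A|\ge n+k$. But that inequality \emph{is} the theorem; everything before it is bookkeeping, and you leave it explicitly as a ``heuristic'' with two unexecuted strategies. Neither sketch closes the gap. The chain $a_2+A$ supplies only $k$ candidate elements while you need $n-k+1$ new ones, and $n$ can be as large as $2k-4$ under the hypothesis, so even if every term of that chain were new you could fall short; worse, deciding which sums $a_2+a_j$ coincide with elements of $A\cup(n+A)$ is precisely the hard part, and there is no term-by-term criterion that resolves it without further structural input. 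The induction on $k$ is closer to the real proofs, but the inductive step is where all the work lives: deleting $a_k$ can change the normalization (the gcd of the remaining differences may jump, and the diameter drops to $a_{k-1}$), and the case in which $A\setminus\{a_k\}$ no longer satisfies the doubling hypothesis relative to its own diameter requires exactly the case analysis that Freiman's original argument, and the later treatments of Lev--Smeliansky and Steinig, are built to handle --- typically by first proving the unconditional lemma $|A+A|\ge \min(n,2k-3)+k$ for normalized sets and then deducing the AP containment. Until that lemma (or an equivalent) is actually established, what you have is a correct reduction together with a restatement of the remaining problem, not a proof.
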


Freiman's $3k-3$-theorem does not directly apply in our situation, since
we are dealing with a subset of ${\Bbb Z}/{p^2 {\Bbb Z}}$ rather than
a subset of ${\Bbb Z}$.  The problem is that the congruence $a+b\equiv c+d \pmod{p^2}$
does not necessarily mean that $a+b=c+d$ as an equation in the
integers.  Thus a sumset in ${\Bbb Z}/p^2 {\Bbb Z}$ could look very
different from a sumset in ${\Bbb Z}$.   However, if we could choose
representatives for the residue classes of $A \subset \cg{p^2}$ to lie always in
the interval $(-p^2/4,p^2/4]$ then the congruence $a+b\equiv c+d\pmod{p^2}$
is indeed equivalent to the equation $a+b=c+d$.   If this can be done, then we may
as well view $A$ as a subset of the integers and results such as Freiman's $3k-3$-theorem
would become applicable.   This is a case of a very useful notion of Freiman which
identifies when two subsets of different groups behave additively in a similar way.

\begin{definition}[Freiman isomorphism]  Let $A\subset G$ and $B\subset H$ be
two subsets of the abelian groups $G$ and $H$.  We say that $A$ and $B$ are
Freiman isomorphic if there is a bijection $\phi: A \to B$ such that the relation
$x+y=z+w$ holds with $x$, $y$, $z$, $w$ in $A$ if and only if
the relation $\phi(x)+\phi(y)=\phi(z)+\phi(w)$ holds in the group $H$.
\end{definition}

Note that if $A$ and $B$ are Freiman isomorphic then $|A+A|= |B+B|$.
Returning to our problem, we would like to show that our set $A\subset \cg{p^2}$
is Freiman isomorphic to a subset of the integers, and then
apply Freiman's $3k-3$ theorem.   This follows a strategy pioneered by Freiman himself,
who showed that  small subsets of ${\Bbb Z}/p{\Bbb Z}$ with
small doubling are isomorphic to subsets of the integers (also called {\sl rectifiable})  leading to the
following theorem (see Section 2.8 of Nathanson \cite{nathanson}).

\begin{nonumtheorem}[Freiman's $2.4$ theorem]\label{thm:fzp}
Set $c=1/35$ and $\alpha=2.4$. Let $A\subset\cg{p}$ with $|A|=k\leq
cp$. If $ |A+A|=2k-1+b\leq\alpha k-3$  then $A$ is contained in an
arithmetic progression in $\cg{p}$ of length $k+b$.
\end{nonumtheorem}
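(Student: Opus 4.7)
The plan is to reduce to the Freiman $3k-3$ theorem for subsets of $\Z$ already recorded in the excerpt. Concretely, I want to show that after an invertible affine transformation in $\cg{p}$, the set $A$ is Freiman $2$-isomorphic to a subset of the integers; this is the rectification strategy of Freiman described above.

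First, I would observe that dilation $A\mapsto \lambda A$ with $\lambda\in(\cg{p})^{\times}$ preserves the hypotheses and the conclusion: it preserves $|A|$, preserves $|A+A|$, and sends arithmetic progressions to arithmetic progressions of the same length. So I am free to replace $A$ by $\lambda A$ for any convenient nonzero $\lambda$.

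The heart of the argument is the following rectification lemma: under the hypotheses $|A|=k\le p/35$ and $|A+A|\le 2.4k-3$, there exists $\lambda\in(\cg{p})^{\times}$ such that, choosing symmetric representatives of $\lambda A$ in $(-p/2,p/2]$, the set $\lambda A$ lies in an interval of length strictly less than $p/2$. Granting this, let $\widetilde{\lambda A}\subset \Z$ be the resulting integer lift. For any $a,b,c,d\in \lambda A$ the congruence $a+b\equiv c+d\pmod p$ forces the honest equality $a+b=c+d$ in $\Z$, since both sides lie in an interval of length strictly less than $p$. Hence $\widetilde{\lambda A}$ is Freiman $2$-isomorphic to $\lambda A$, and in particular $|\widetilde{\lambda A}+\widetilde{\lambda A}|=|A+A|=2k-1+b$.

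With rectification in hand, the finish is automatic. Since $|\widetilde{\lambda A}+\widetilde{\lambda A}|=2k-1+b\le 2.4k-3\le 3k-4$ (valid for $k\ge 2$), Freiman's $3k-3$ theorem applied to $\widetilde{\lambda A}\subset \Z$ yields an arithmetic progression in $\Z$ of length $k+b$ containing $\widetilde{\lambda A}$. Reducing mod $p$ gives an arithmetic progression in $\cg{p}$ of the same length containing $\lambda A$: its common difference is nonzero mod $p$ because $\widetilde{\lambda A}$ has $k\ge 3$ distinct residues mod $p$, and $k+b\le p$ because of the density bound, so no wrap-around occurs. Undoing the dilation by $\lambda$ produces the desired arithmetic progression containing $A$.

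The main obstacle is proving the rectification lemma with the explicit numerical constants. The classical proof is an averaging argument that controls, as $\lambda$ ranges over $(\cg{p})^{\times}$, the length $L(\lambda)$ of the shortest arc in $\cg{p}$ containing $\lambda A$; an appropriate moment of $L(\lambda)$ is bounded in terms of the doubling ratio $|A+A|/|A|$ and the density $|A|/p$. The choice of constants $c=1/35$ and $\alpha=2.4$ reflects a balance between the density needed for the averaging argument to deliver an arc of length less than $p/2$, and the doubling ratio needed to remain below the $3k-4$ threshold of the integer $3k-3$ theorem after rectification. Sharpening either constant would require either a sharper rectification estimate or a stronger inverse theorem in~$\Z$.
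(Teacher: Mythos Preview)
The paper does not prove this theorem. It is stated as a classical result of Freiman, with a pointer to Section~2.8 of Nathanson~\cite{nathanson}, and is included only for context: the paper then moves on to its own rectification result (Proposition~6.3) for $\cg{p^2}$, proved by a Fourier-analytic argument (Lemma~\ref{lem:largeFC}) combined with Lev's arc lemma (Lemma~\ref{lem:nathanson}) and Proposition~\ref{prop:rect}. So there is nothing in the paper to compare your argument to.

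That said, your outline is the standard one and is correct in structure: dilate to place $A$ in a short interval, lift to $\Z$, and invoke the $3k-3$ theorem. The gap is that you do not actually carry out the rectification step. You write that ``the classical proof is an averaging argument that controls\ldots the length $L(\lambda)$ of the shortest arc,'' but this is a description, not an argument, and the specific constants $c=1/35$, $\alpha=2.4$ come out of a concrete calculation that you have not done. If you want a self-contained proof, you need either to run Freiman's original Fourier/moment argument (as in Nathanson), or to use the large-Fourier-coefficient plus arc-concentration method the paper employs for $\cg{p^2}$ in Lemmas~\ref{lem:largeFC} and~\ref{lem:nathanson}; either route will produce explicit constants, though not necessarily exactly $1/35$ and $2.4$.
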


More recently Bilu, Lev and Ruzsa \cite{bilu-lev-ruzsa} and Green and Ruzsa \cite{green-ruzsa}
have shown how any small subset of $\cg{p}$ with small doubling
may be rectified.   By adapting these arguments to our setting of $\cg{p^2}$ we shall
establish the following Proposition.

\begin{proposition} Let $A\subset \cg{p^2}$ be a set of coset representatives for $p(\cg{p^2}) \subset \cg{p^2}$ and
suppose that $|A+A|\le 2|A|$.  Then there exists a dilation $c\in (\cg{p^2})^{\times}$ and a translation $d\in \cg{p^2}$ such
that $cA+d$ lies in $(-p^2/4,p^2/4]$.  Thus $A$ is Freiman isomorphic to a subset of the integers.
\end{proposition}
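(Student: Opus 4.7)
The plan is to adapt the Bilu--Lev--Ruzsa / Green--Ruzsa rectification strategy from $\cg{p}$ to $\cg{p^2}$: we seek a unit $c \in (\cg{p^2})^{\times}$ such that $cA$ lies inside an arc of $\cg{p^2}$ of length at most $p^2/2$, and then a translation by some $d$ slides that arc into $(-p^2/4,p^2/4]$.  Within such an interval no two integer sums can coincide modulo $p^2$, so the resulting inclusion realizes a Freiman isomorphism between $cA+d$ and a subset of $\Z$.

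A fortunate simplification particular to our setting is that the Fourier transform $\hat 1_A(\xi) = \sum_{a\in A} e(a\xi/p^2)$ vanishes at every nonzero multiple of $p$.  Indeed, because $A$ reduces to all of $\cg{p}$, for $\xi = p\xi'$ with $\xi'\not\equiv 0 \pmod p$ we have
\[
\hat 1_A(p\xi') \;=\; \sum_{a\in A} e(a\xi'/p) \;=\; \sum_{j=0}^{p-1} e(j\xi'/p) \;=\; 0.
\]
So the full Fourier mass of $1_A$ away from the origin lives on the units $(\cg{p^2})^{\times}$, and every ``large frequency'' we extract is automatically a legal dilation.  Combining the small-doubling hypothesis with Cauchy--Schwarz gives the additive-energy bound $E(A,A) \ge |A|^4/|A+A| \ge p^3/2$, and by Parseval $\sum_{\xi} |\hat 1_A(\xi)|^4 = p^2 E(A,A) \ge p^5/2$.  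Subtracting the $\xi=0$ contribution $p^4$ and combining with $\sum_{\xi \ne 0}|\hat 1_A(\xi)|^2 = p^3-p^2$ yields a unit $\xi_0$ with $|\hat 1_A(\xi_0)|^2$ of order $p^2/2$.

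The decisive step --- and the one I expect to be the main obstacle --- is to upgrade this \emph{single} substantial Fourier coefficient into \emph{concentration} of $\xi_0 A$ inside an arc of length $\le p^2/2$.  A bound $|\hat 1_A(\xi_0)| \gtrsim p/\sqrt{2}$ on its own is insufficient, since that much Fourier mass is compatible with the phases $\xi_0 a/p^2$ splitting between opposite ends of the circle.  Following Bilu--Lev--Ruzsa and Green--Ruzsa, the remedy is to combine Pl\"unnecke--Ruzsa bounds for $|kA-\ell A|$ with Bogolyubov's lemma (which embeds a Bohr set inside $2A-2A$) and with Chang's lemma (which bounds the dimension of the large spectrum of $1_A$ in terms of the doubling constant).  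Because $K=2$ is essentially extremal, the large spectrum should collapse to rank one (namely $\{\xi_0\}$), so the ambient Bohr set degenerates into a genuine arc centered at the origin.  An amplification step --- applying the same Fourier extraction to $A - A$, whose doubling is also controlled by Pl\"unnecke--Ruzsa --- boosts $|\hat 1_A(\xi_0)|$ from $p/\sqrt{2}$ up to $(1-o(1))p$, and a standard convexity inequality then forces $\xi_0 A$ to cluster in an arc of length $\le p^2/2$.  A final translation by $d$ centers that arc in $(-p^2/4,p^2/4]$, completing the proof.
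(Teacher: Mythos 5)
Your opening is sound and matches the paper's own route: the small-doubling hypothesis gives, via Cauchy--Schwarz/Parseval (the paper uses an essentially equivalent computation with $S=A+A$), a nonzero frequency with $|\hat 1_A(\xi_0)|\approx p/\sqrt2$, and your observation that $\hat 1_A$ vanishes at nonzero multiples of $p$ (because $A$ is a complete residue system mod $p$) is exactly how the paper ensures $\xi_0$ is a unit and hence a legitimate dilation. The gap is in everything after that. First, no Fourier bound short of $|\hat 1_A(\xi_0)|=|A|$ can force \emph{all} of $A$ into a short arc: moving a single element anywhere on the circle changes $|\hat 1_A(\xi_0)|$ by at most $2$, so even $|\hat 1_A(\xi_0)|\ge p-2$ is compatible with one element sitting diametrically opposite the rest. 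Your final sentence (``a standard convexity inequality then forces $\xi_0A$ to cluster in an arc of length $\le p^2/2$'') therefore cannot close the proof as stated. Second, the proposed ``amplification'' to $(1-o(1))p$ is asserted, not argued --- it is not explained how extracting a frequency from $A-A$ improves the Fourier coefficient of $A$ itself --- and the appeal to Chang's lemma and Bogolyubov to make ``the large spectrum collapse to rank one'' is a heuristic: Chang's lemma with $K=2$ bounds the dimension of the spectrum, it does not reduce it to a single frequency. Even granting the amplification, an $o(1)$ conclusion would not deliver the proposition for every prime $p$, which the paper needs (it verifies $p\ge 7$ by an explicit numerical combination and treats $p=2,3,5$ separately).

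What is missing is the paper's combinatorial half (its Proposition 6.4). The Fourier step is only used to place a \emph{large fraction} of $A$ in a half-arc: by Lev's lemma on sums of unit vectors, $|\hat 1_A(\xi_0)|\ge p\sqrt{(p-2)/(2(p-1))}$ yields at least $\tfrac p2\bigl(1+\sqrt{(p-2)/(2(p-1))}\bigr)\approx 0.85p$ elements of a dilate of $A$ inside a translate of $(-p^2/4,p^2/4]$. The upgrade from ``most of $A$'' to ``all of $A$'' is then purely combinatorial: letting $A_0$ be the maximal such intersection with $|A_0|=\ell<|A|$, one has $2A_0\subset 2A$ with $|2A|\le 2\ell-1+b$ and $b\le\ell-3$, so Freiman's $3k-3$ theorem confines $A_0$ to an arithmetic progression of length $\ell+b$, which (after a further dilation coprime to $p$) is an interval around the origin; any remaining $a\in A$ with representative in $(p^2/4,p^2/2]\cup(-p^2/2,-p^2/4]$ then makes $A_0+\{a\}$ essentially disjoint from $A_0+A_0$, forcing $|2A|\ge 3\ell-2$, a contradiction. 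You need this argument (or a substitute for it); the Fourier analysis alone cannot finish.
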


 Assuming this Proposition, let us now prove Theorem 6.1.

 \begin{proof}[Proof of Theorem 6.1 assuming Proposition 6.3] Let $A\subset\Z/p^2\Z$ be a set of coset
representatives with only two distinct carries, so that $|A+A|\leq
2|A|$.   By Proposition 6.3 we may dilate $A$ by some $c \in (\cg{p^2})^{\times}$
and obtain a set contained in $(d-p^2/4,d+p^2/4]$ for some $d \in \cg{p^2}$.
This means that $A$ is Freiman isomorphic to a subset of the integers, and applying Freiman's $3k-3$-theorem
we see that $A$ must lie in an arithmetic progression of length at most $|A+A|-|A|+1 \le (p+1)$.

  After a dilation if necessary, we may assume that $A$ lies in an
interval of length $p+1$, missing exactly one element from this
interval. Since $A$ consists of coset representatives, the missing
element must be one of the endpoints of the interval, so that
$A$ consists of consecutive elements; say $A = \{u, u+1, \ldots, u+p-1\}$ for some $u$.
 It remains to show that $u\equiv 0,1\pmod p$.

 To see this, if $u\equiv i\pmod p$ for
some $2\leq i\leq p-1$, then the following examples show that
there must be three types of carries:
\[ (u+p-1)+(u+p-1)-(u+i-2)=2p+u-i; \ \ u+u-(u+i)=u-i;
\]
and
\[
(u+p-1)+u-(u+i-1)=p+u-i. \]
This completes our proof.
\end{proof}

Now we turn to the proof of Proposition 6.3, whose argument involves
two parts.  First we establish a combinatorial result which shows that
if a substantial part of $A$ can be translated and dilated into the interval
$(-p^2/4,p^2/4]$ then all of $A$ can be.   This result holds for all cyclic
groups $\cg{m}$.  Second we use some simple Fourier analysis
to show that a large part of $A$ can be translated and dilated
into $(-p^2/4,p^2/4]$ so that our first argument may be used.  This
argument requires that we are working in $\cg{p^2}$.

\subsection{From a large subset to the entire set}

\begin{proposition}\label{prop:rect}
Let $m$ be a positive integer, and let $A$ be a subset
of $\cg{m}$ such that if $x \neq y\in A$ then $(x-y,m)=1$.  Of all the sets $cA+d$
(with $c\in (\cg{m})^*$ and $d\in \cg{m}$ let $\ell$ denote the maximum intersection of
such a set with $(-m/4,m/4]$.  Suppose that $\ell < |A|$.  Then either
$\ell < (|2A|+4)/3$ or $m\le 6(|2A|-\ell)$.
\end{proposition}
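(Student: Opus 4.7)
The plan is to argue by contradiction: assuming $\ell < |A|$, $\ell \ge (|A+A|+4)/3$, and $m > 6(|A+A|-\ell)$, I will deduce the impossible conclusion $\ell = |A|$. Set $s := |A+A|$ for brevity. The argument has three stages.

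First, after replacing $A$ by the optimizing $cA+d$, we may assume that $A_1 := A \cap (-m/4, m/4]$ already realizes the maximum, i.e.\ $|A_1|=\ell$. The hypothesis gives $|A_1+A_1| \le s \le 3\ell-4$, exactly the range in which Freiman's $3k-3$ theorem applies to $A_1$ viewed as a subset of $\mathbb{Z}$. This produces an arithmetic progression $a_0 + r\{0,1,\dots,L-1\}$ containing $A_1$, with $L \le s-\ell+1$. Since any two distinct elements of $A$ have difference coprime to $m$ and $r$ divides every such difference, we get $\gcd(r,m)=1$. Dilating by $r^{-1}$ and translating by $-r^{-1}a_0$ (both admissible operations that preserve $\ell$), we may assume outright that $A_1 \subset \{0,1,\dots,L-1\}$. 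The bound $L-1 \le s-\ell < m/6 < m/4$ ensures this block still sits inside $I := (-m/4, m/4]$, so the maximum intersection is unchanged.

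Second, I confine $A_2 := A \setminus A_1$ to a short arc using maximality. For every integer $d \in (-m/4,\, m/4-L+1]$, the shift $A_1+d$ still lies inside $I$, contributing $\ell$ points to $(A+d) \cap I$; by maximality $|(A+d) \cap I| \le \ell$, forcing $A_2 \cap (I-d) = \emptyset$. As $d$ varies over this range, the translates $I-d$ cover all of $\mathbb{Z}/m\mathbb{Z}$ except for an arc of exactly $L$ consecutive residues beginning at $m/2$; therefore $A_2 \subset \{m/2, m/2+1, \dots, m/2+L-1\} \pmod{m}$.

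Third, I derive a contradiction from a sumset estimate. The hypothesis $m > 6(s-\ell) \ge 6(L-1)$ gives $L \le m/4$, so the integer blocks $[0,2L-2]$ and $[m/2,\, m/2+2L-2]$ that contain $A_1+A_1$ and $A_1+A_2$, respectively, neither wrap around modulo $m$ nor overlap in $\mathbb{Z}/m\mathbb{Z}$. Applying the elementary bound $|X+Y| \ge |X|+|Y|-1$ for integer sumsets, we obtain $|A_1+A_1| \ge 2\ell-1$ and $|A_1+A_2| \ge \ell+|A_2|-1$, so
\[
s \;\ge\; |A_1+A_1| + |A_1+A_2| \;\ge\; 3\ell + |A_2| - 2.
\]
If $A_2$ were nonempty, $|A_2|\ge 1$ would give $s \ge 3\ell-1$, contradicting $\ell \ge (s+4)/3$ (equivalently, $s \le 3\ell-4$). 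Hence $A_2 = \emptyset$ and $\ell = |A|$, contrary to assumption.

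The crux is the second step: once $A_1$ has been squeezed into a length-$L$ block, it is maximality alone (not any algebraic feature of $A$) that confines $A_2$ to the diametrically opposite block of the same size. The two hypotheses play complementary roles: $\ell \ge (s+4)/3$ triggers Freiman's theorem and supplies the final numerical contradiction, while $m > 6(s-\ell)$ guarantees that the short arcs housing $A_1$ and $A_2$ are far enough apart for their pairwise sumsets to remain disjoint modulo $m$.
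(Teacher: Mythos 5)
Your argument is correct, and its skeleton matches the paper's: optimize the translate/dilate, apply Freiman's $3k-3$ theorem to $A_1=A\cap(-m/4,m/4]$ (which is Freiman isomorphic to a set of integers) to trap it in a short progression whose common difference is forced to be coprime to $m$, renormalize, and then derive a contradiction by exhibiting two essentially disjoint pieces of $A+A$ of total size exceeding $3\ell-4$. Where you genuinely diverge is the middle step. The paper re-centers $A_1$ at the origin, takes a \emph{single} element $a\in A\setminus A_1$ (which must lie in $(m/4,3m/4)$ by maximality), and observes that $A_1+A_1$ lives near $0$ while $A_1+\{a\}$ lives far from $0$, overlapping in at most one point; this already gives $|A+A|\ge 3\ell-2$. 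You instead exploit maximality over \emph{all} translates $d$ with $A_1+d\subset(-m/4,m/4]$ to confine the whole of $A_2=A\setminus A_1$ to an arc of length about $L$ near $m/2$, which buys you exact disjointness of $A_1+A_1$ and $A_1+A_2$ and the marginally stronger count $|A+A|\ge 3\ell+|A_2|-2$. Both mechanisms work under the same hypothesis $m>6(|2A|-\ell)$; yours avoids the paper's ``at most one common element'' bookkeeping at the cost of the covering argument, where you should be slightly careful with floors (e.g.\ for $m\equiv 3\pmod 4$ the uncovered arc has $L+1$ residues rather than $L$, and $L\le m/4$ needs $m\ge 12$) --- none of which affects the conclusion.
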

\begin{proof}  Let us suppose that $A$ has been already translated and dilated to have
maximum intersection with $(-m/4,m/4]$, and let $A_0$ denote this
intersection. Thus $|A_0| =\ell <|A|$ by assumption, and $A_0$ is
Freiman isomorphic to a subset of the integers.

Now $2A_0 \subset 2A$, and write $|2A| = 2\ell -1 + b$.  We may assume that $b\le \ell -3$,
else the first alternative in the Proposition holds.  Since $|2A_0| \le 2\ell -1 +b$, by Freiman's $(3k-3)$-theorem we see that $A_0$ is contained in an arithmetic progression of
size $\ell +b$.  Since the elements of $A$ (and hence $A_0$) satisfy that $(x-y,m)=1$, the
common difference of this arithmetic progression must be coprime to $m$.  Therefore
by translating and dilating (using dilations coprime to $m$) we may assume that $A_0$
is contained inside $(-(\ell+b)/2,(\ell+b)/2]$.

Since $\ell <|A|$, there must be an element $a \in A$ such that when reduced $\pmod m$,
$a$ lies either in $(m/4,m/2]$ or $(-m/2,-m/4]$.  Now the set $A_0+A_0$ has at least $2\ell -1$ elements, and all of these lie in
$(-\ell-b,\ell+b]$, and the set $A_0 +\{a\}$
has $\ell$ elements all lying in either $(m/4-(\ell+b)/2,m/2+(\ell+b)/2]$ or
$(-m/2-(\ell+b)/2,-m/4+(\ell+b)/2]$.  If the second alternative of the proposition doesn't
hold, then the sets $A_0+A_0$ and $A_0+\{a\}$ have at most one element in common,
and thus give at least $3\ell -2$ elements in $2A$ which
is a contradiction.
\end{proof}

 \subsection{Obtaining concentration near the origin}  Now we carry out the second part of
 the argument showing that a large part of $A$ can be put inside $(-p^2/4,p^2/4]$.

 \begin{proposition}[Concentration near the origin]\label{cor:rough}
Let $A\subset\Z/p^2\Z$ be as in the statement of Proposition 6.3. Then there exist $c\in (\Z/p^2\Z)^{\times}$ and $d\in\Z/p^2\Z$ such
that after dilating $A$ by $c$ and translating by $d$, we have
$$
|(cA+d)\cap (-p^2/4,p^2/4]| \ge \frac p2 \Big( 1 + \Big(\frac{p-2}{2(p-1)}\Big)^{\frac 12}\Big).
$$
\end{proposition}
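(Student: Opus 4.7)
My plan is to use Fourier analysis on $\cg{p^2}$ in two stages: first extract a large non-trivial Fourier coefficient of $\mathbf{1}_A$ from the small doubling hypothesis, and then convert this Fourier concentration into geometric concentration via appropriate choices of $c$ and $d$.

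For the Fourier extraction, note that since $A$ is a complete set of coset representatives for $p(\cg{p^2})$, it meets each residue class modulo $p$ exactly once, so
\[
\hat{\mathbf{1}}_A(kp) = \sum_{a\in A} e(-kpa/p^2) = \sum_{j=0}^{p-1} e(-kj/p) = 0, \qquad k = 1, \ldots, p-1.
\]
Parseval then gives $\sum_{c\in(\cg{p^2})^{\times}}|\hat{\mathbf{1}}_A(c)|^2 = p^2|A|-|A|^2 = p^2(p-1)$. On the other hand, the hypothesis $|A+A|\le 2|A|$ combined with Cauchy--Schwarz applied to the representation function yields the additive energy bound $E(A) \ge |A|^4/|A+A| \ge p^3/2$; Plancherel translates this to $\sum_c |\hat{\mathbf{1}}_A(c)|^4 \ge p^5/2$, and subtracting the $c=0$ term $p^4$ gives $\sum_{c\ne 0}|\hat{\mathbf{1}}_A(c)|^4 \ge p^4(p-2)/2$. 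Combining via the elementary inequality $\|\cdot\|_\infty^2 \ge \|\cdot\|_4^4/\|\cdot\|_2^2$ yields a $c_0 \in (\cg{p^2})^{\times}$ with
\[
|\hat{\mathbf{1}}_A(c_0)|^2 \ge \frac{p^4(p-2)/2}{p^2(p-1)} = \frac{p^2(p-2)}{2(p-1)} = p^2\beta^2,
\]
where $\beta = ((p-2)/(2(p-1)))^{1/2}$.

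To pass from this Fourier bound to geometric concentration, set $B = c_0 A$, so $|\hat{\mathbf{1}}_B(1)| = |\hat{\mathbf{1}}_A(c_0)| \ge p\beta$, and study $g(d) = |(B+d) \cap I|$ with $I = (-p^2/4, p^2/4]$. Its Fourier transform in $d$ is $\hat g(t) = \hat{\mathbf{1}}_I(t)\overline{\hat{\mathbf{1}}_B(t)}$, and a direct Dirichlet-kernel computation gives $\hat{\mathbf{1}}_I(1) = 1/(2\sin(\pi/(2p^2)))$, which is real, positive, and at least $p^2/\pi > p^2/4$. Choose $d$ so that $\overline{\hat{\mathbf{1}}_B(1)}e(d/p^2)$ is positive real; then the $t = \pm 1$ contributions to $F(d) := g(d) - (p^2+1)/(2p)$ align constructively, contributing at least $2|\hat{\mathbf{1}}_I(1)||\hat{\mathbf{1}}_B(1)|/p^2 \ge 2p\beta/\pi$. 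Applying Plancherel to $F$ and keeping only the $t=\pm 1$ terms gives
\[
\sum_d F(d)^2 \ge \frac{2|\hat{\mathbf{1}}_I(1)|^2|\hat{\mathbf{1}}_B(1)|^2}{p^2} \ge \frac{p^4\beta^2}{8}.
\]
Converting this into $\max_d F(d) \ge p\beta/2$ and adding back the mean $(p^2+1)/(2p) > p/2$ gives the desired $\max_d g(d) \ge (p/2)(1+\beta)$.

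The main obstacle will be the $L^2$-to-$L^\infty$ passage with the correct constant: the naive bound $\|F\|_\infty \ge \|F\|_2/p$ is short of the target by roughly a factor of $\sqrt{2}$, and recovering this factor requires exploiting an approximate $\mathbb{Z}/2$ anti-symmetry---namely that the shift $I + (p^2+1)/2$ differs from $I^c$ by only a single element, so $F(d + (p^2+1)/2) = -F(d)$ up to an $O(1)$ perturbation. The one-element error stemming from $|I| = (p^2+1)/2$ being not exactly half of $p^2$ must be absorbed either using the $O(1/p)$ slack between the actual mean $(p^2+1)/(2p)$ and the reference value $p/2$, or via a more refined analysis controlling the contributions of the higher Fourier modes of $\mathbf{1}_I$ at the chosen value of $d$.
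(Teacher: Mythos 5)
Your first stage is correct and lands exactly where the paper does: the vanishing of $\hat{A}(kp)$ for $k\not\equiv 0\pmod p$, the Parseval identity $\sum_{r\neq 0}|\hat A(r)|^2=p^2(p-1)$, and the energy bound $E(A)\ge |A|^4/|A+A|$ combined with $\|\hat A\|_\infty^2\ge \|\hat A\|_4^4/\|\hat A\|_2^2$ reproduce the conclusion of the paper's Lemma \ref{lem:largeFC}, namely $\max_{r\neq 0}|\hat A(r)|\ge p\beta$ with $\beta=((p-2)/(2(p-1)))^{1/2}$, and the coprimality of the maximizing frequency lets you dilate it to $r=1$. This is a perfectly good (and essentially equivalent) substitute for the paper's argument, which uses the identity $|A|^2=\frac 1m\sum_k \hat A(k)^2\hat S(-k)$ with $S=A+A$ instead of the energy.

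The second stage, however, has a genuine gap, and it is the one you flag yourself without resolving. The paper converts the large Fourier coefficient into the stated concentration using Lev's combinatorial result (Lemma \ref{lem:nathanson}): for unit vectors $z_1,\dots,z_m$, some arc of length $\pi$ contains at least $\tfrac 12(m+|z_1+\cdots+z_m|)$ of the points. Applied to $e^{2\pi i a/p^2}$, $a\in A$, this gives precisely $\tfrac p2(1+\beta)$, with no loss. Your Plancherel route provably cannot reach this constant: with the sharp value $\hat{1}_I(1)=1/(2\sin(\pi/(2p^2)))\approx p^2/\pi$, the bound $\|F\|_\infty\ge \|F\|_2/p$ yields only $\max_d|F(d)|\gtrsim (\sqrt 2/\pi)\,p\beta\approx 0.45\,p\beta$, short of the required $p\beta/2$ by a factor $\pi/(2\sqrt 2)\approx 1.11$ (your ``factor of $\sqrt 2$'' comes from using $p^2/4$ in place of $p^2/\pi$). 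The approximate antisymmetry $F(d+(p^2+1)/2)\approx -F(d)$ repairs only the sign issue ($\max F$ versus $\max|F|$); it does not improve the $L^2$-to-$L^\infty$ constant, since even for an exactly antisymmetric $F$ one still only gets $\max F\ge \|F\|_2/p$. A first-moment variant with a nonnegative weight such as $1-\sin$ fares no better, giving $\tfrac p2+\tfrac{1}{\pi}|\hat A(1)|$ rather than $\tfrac p2+\tfrac 12|\hat A(1)|$. So the ``more refined analysis of the higher modes'' you defer to is not a technicality but the actual content of the step; to close the argument at the stated constant you essentially need Lev's lemma (or its proof, which is a genuinely combinatorial rearrangement argument, not an $L^2$ estimate). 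A weaker constant would in fact still feed into Proposition 6.4 for large $p$, but it would not prove the Proposition as stated, and the exact constant is used directly for $p=3,5$.
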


   This
 uses a little Fourier analysis:  For $A\subset\Z/m\Z$ the Fourier
coefficients are defined by the formula
\[ \hat{A}(r)=\sum_{a\in A}e^{2\pi ira/m} \]
for $r\in\Z/m\Z$.

\begin{lemma}[Obtaining a large Fourier
coefficient]\label{lem:largeFC} Let  $m$ be a positive integer, and
let $A\subset \cg{m}$ be a subset. Write  $|A|=\alpha_1m$ and
$|A+A|=\alpha_2m$. Then
$$
\max_{r\neq 0} |{\hat A}(r)| \ge |A| \Big( \frac{\alpha_1
(1-\alpha_{2})}{\alpha_{2}(1-\alpha_1)}\Big)^{1/2}.
$$
\end{lemma}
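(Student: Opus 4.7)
The plan is to prove the lemma through a standard ``$L^4$ energy'' argument based on Plancherel, Cauchy--Schwarz, and the fact that the convolution $1_A \ast 1_A$ is supported on $A+A$.

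First I would record the basic Fourier identities relative to the convention $\hat{A}(r) = \sum_{a\in A} e^{2\pi i r a/m}$. Namely Parseval gives $\sum_r |\hat{A}(r)|^2 = m|A|$, and since the convolution $(1_A \ast 1_A)(x)$ has Fourier transform $\hat{A}(r)^2$, a second application of Parseval gives
\[
\sum_{x\in\Z/m\Z} (1_A\ast 1_A)(x)^2 = \frac{1}{m}\sum_r |\hat{A}(r)|^4.
\]
These are routine but essential; I would set them up explicitly to fix conventions.

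Next I would exploit the support condition. The function $1_A \ast 1_A$ vanishes outside $A+A$, and its total mass is $\sum_x (1_A \ast 1_A)(x) = |A|^2$. Applying Cauchy--Schwarz on the support gives
\[
|A|^4 = \Bigl(\sum_{x\in A+A} (1_A\ast 1_A)(x)\Bigr)^2 \le |A+A| \sum_x (1_A\ast 1_A)(x)^2 = \frac{|A+A|}{m}\sum_r |\hat{A}(r)|^4.
\]
Using $|A+A| = \alpha_2 m$, this rearranges to the lower bound $\sum_r |\hat{A}(r)|^4 \ge |A|^4/\alpha_2$.

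Finally I would isolate the $r=0$ term, where $\hat{A}(0) = |A|$, and use the trivial sup bound on the remaining terms:
\[
\sum_{r\neq 0} |\hat{A}(r)|^4 \le \Bigl(\max_{r\neq 0}|\hat{A}(r)|\Bigr)^2 \sum_{r\neq 0} |\hat{A}(r)|^2 = M^2\bigl(m|A|-|A|^2\bigr),
\]
where $M = \max_{r\neq 0}|\hat{A}(r)|$. Combining with the previous bound yields
\[
M^2\, |A|(m-|A|) \ge |A|^4\Bigl(\tfrac{1}{\alpha_2}-1\Bigr) = \frac{|A|^4(1-\alpha_2)}{\alpha_2},
\]
and substituting $m-|A| = m(1-\alpha_1)$ together with $m = |A|/\alpha_1$ gives $M^2 \ge |A|^2 \alpha_1(1-\alpha_2)/(\alpha_2(1-\alpha_1))$, which is the claim after taking square roots.

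There is no real obstacle here beyond keeping the Fourier normalization straight; the only ``clever'' step is the Cauchy--Schwarz lower bound on the $L^4$ energy via the support in $A+A$, and everything else is algebra.
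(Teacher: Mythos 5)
Your proof is correct and delivers exactly the constant in the statement, but it travels a slightly different road than the paper. The paper starts from the identity $|A|^2=\sum_{a_1,a_2\in A,\,a_1+a_2\in S}1$ with $S=A+A$, expands it exactly in Fourier as $\tfrac1m\sum_k \hat A(k)^2\hat S(-k)$, peels off the $k=0$ term, and bounds the tail by $\max_{k\neq 0}|\hat A(k)|$ times a Cauchy--Schwarz pairing of $|\hat A|$ against $|\hat S|$ (so it needs Parseval for the sumset indicator as well). You instead apply Cauchy--Schwarz in physical space first, lower-bounding the additive energy $\sum_x(1_A\ast 1_A)(x)^2=\tfrac1m\sum_r|\hat A(r)|^4$ by $|A|^4/|A+A|$ via the support condition, then peel off $r=0$ and dominate the remaining $L^4$ mass by $M^2\sum_{r\neq 0}|\hat A(r)|^2$. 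Your route only ever uses the cardinality $|A+A|$ and the Fourier data of $A$ itself, never $\hat S$, which makes it marginally cleaner; the paper's route keeps everything as an exact identity until the last step. Both are instances of the standard ``small sumset forces a large nonzero Fourier coefficient'' mechanism, and it is a pleasant coincidence (or rather, a reflection of the fact that both ultimately apply Cauchy--Schwarz to the same correlation $\sum_x(1_A\ast 1_A)(x)1_S(x)$) that the two arguments produce the identical bound $|A|\bigl(\alpha_1(1-\alpha_2)/(\alpha_2(1-\alpha_1))\bigr)^{1/2}$.
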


\begin{proof}  Write $S=A+A$. Note that
$$
\alpha_1^2 m^2 = |A|^{2} = \sum_{\substack{a_1,a_2\in A\\
a_1+a_2\in S}}1.
$$
Using Parseval's identity this equals
$$
\frac 1m \sum_k {\hat A}(k)^{2} \widehat{S}(-k)= \alpha_1^{2}
\alpha_{2} m^{2} + \frac 1m \sum_{k\neq 0} {\hat A}(k)^{2}
\widehat{S}(-k).
$$
Thus
\begin{align*}
\alpha_1^2 (1-\alpha_2) m^2 = \frac 1m \Big| \sum_{k\neq 0} {\hat
A}(k)^2 \widehat{S}(-k)\Big| \le \Big(\max_{k\neq 0}|{\hat
A}(k)|\Big) \frac{1}{m} \sum_{k\neq 0} |{\hat A}(k)|
|\widehat{S}(-k)|.
\end{align*}
By Cauchy's inequality and Parseval's identity
\begin{align*}
\frac 1m \sum_{k\neq 0} |{\hat A}(k)| |\widehat{S}(-k)| &\le
\Big(\frac 1m \sum_{k\neq 0} |{\hat A}(k)|^2 \Big)^{\frac 12}
\Big(\frac 1m \sum_{k\neq 0} |\widehat{S}(-k)|^2 \Big)^{\frac 12}
\\
&= (\alpha_1(1-\alpha_1))^{\frac 12} (\alpha_2 (1-\alpha_2))^{\frac
12}m,
\end{align*}
and the Lemma follows with a little rearranging.
\end{proof}

We also require the following combinatorial result of Lev\cite{Lev2} (see also Theorem 2.9 of
Nathanson \cite{nathanson}).

\begin{lemma} \label{lem:nathanson}
Let $z_1,\dots,z_m\in\C$ be points on the unit circle. If
\[ |z_1+\dots+z_m|>2n-m+2(m-n)\cos\left(\phi/2\right),\]
then there exists an arc on the unit circle of length $\phi$
containing more than $n$ points.  In particular some arc of length
$\pi$ contains at least $\frac{1}{2} (m+ |z_1+\ldots +z_m|)$ points.
\end{lemma}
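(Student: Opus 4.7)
The plan is to prove the contrapositive: assume every open arc of length $\phi$ contains at most $n$ of the points $z_j$, and deduce $|S| \le 2n - m + 2(m-n)\cos(\phi/2)$, where $S = \sum_j z_j$. By rotating all $z_j$ by a common angle, I may assume $S \ge 0$ is real. Writing $z_j = e^{i\alpha_j}$ with $\alpha_j \in (-\pi, \pi]$, the identity $\cos\alpha = 1 - 2\sin^2(\alpha/2)$ turns the claim into the equivalent lower bound
\[
\sum_{j=1}^m \sin^2(\alpha_j/2) \ \ge\ (m-n)\bigl(1-\cos(\phi/2)\bigr) = 2(m-n)\sin^2(\phi/4).
\]

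A single-arc argument produces only half of this bound. By hypothesis the central arc $A_0 = (-\phi/2, \phi/2)$ contains at most $n$ of the $\alpha_j$, so at least $m-n$ satisfy $|\alpha_j| \ge \phi/2$, and each such point contributes $\sin^2(\alpha_j/2) \ge \sin^2(\phi/4)$. This yields $\sum \sin^2(\alpha_j/2) \ge (m-n)\sin^2(\phi/4)$, off by a factor of two. To recover the missing factor, I would partition the ``outside'' points into an annular zone $\phi/2 \le |\alpha_j| < \phi$ (counts $b_+, b_-$ on the two sides of $0$) and a far zone $|\alpha_j| \ge \phi$ (count $f$). Annular points contribute at least $\sin^2(\phi/4)$ each, while far points contribute at least $\sin^2(\phi/2) = 4\sin^2(\phi/4)\cos^2(\phi/4) \ge 2\sin^2(\phi/4)$, using $\cos^2(\phi/4) \ge 1/2$ for $\phi \le \pi$. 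Thus it suffices to show the counting inequality $(b_+ + b_-) + 2f \ \ge\ 2(m-n).$

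To establish this counting inequality I would invoke the arc hypothesis at multiple arcs. Applying it to the adjacent arcs $A_{\pm} = (\pm\phi/2 - \phi/2,\ \pm\phi/2 + \phi/2)$ yields $a_+ + b_+ \le n$ and $a_- + b_- \le n$, where $a_\pm$ count points in the inner halves $(0,\phi/2)$ and $(-\phi/2, 0)$ of $A_0$. Combined with $|A_0| \le n$ and the total $m = c_0 + a_+ + a_- + b_+ + b_- + f$ (where $c_0$ counts points at $0$), these would already constrain the distribution. The crucial additional ingredient is the hypothesis applied at intermediate arcs centered in $(0, \phi/2)$: such an arc simultaneously covers the right half of $A_0$ together with the right annulus, so its count bound prevents those two regions from being simultaneously full. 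Weaving the intermediate-arc constraints into the count then forces enough of the outside points into the far zone to secure $(b_++b_-)+2f \ge 2(m-n)$.

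The main obstacle is this last counting step: the constraints at $A_0$ and $A_\pm$ alone are not sufficient (configurations can, in principle, concentrate mass near the origin and in the annulus without ever reaching the far zone), so the intermediate arcs must be systematically exploited to rule these out. Once the counting inequality is in hand, the ``in particular'' statement is just the specialization $\phi = \pi$, for which $\cos(\phi/2)=0$ and the bound reduces to $|S| \le 2n - m$; rearranging gives $\max_\theta N(\theta) \ge (m + |S|)/2$ as claimed.
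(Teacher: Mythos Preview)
The paper does not prove this lemma; it is quoted from Lev and from Theorem~2.9 of Nathanson's book.  So there is no paper proof to compare against, and I will assess your argument on its own.

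Your opening reduction is correct: after rotating so that $S := \sum_j z_j$ is real and nonnegative, the claim is equivalent to $\sum_j \sin^2(\alpha_j/2) \ge 2(m-n)\sin^2(\phi/4)$, and the single central arc indeed yields only half of this.  However, the counting inequality $(b_+ + b_-) + 2f \ge 2(m-n)$ to which you then reduce is \emph{false}, so the argument cannot be completed along the lines you propose.

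Here is a counterexample.  Take $\phi = \pi/2$ and five points at angles $0.1,\ 0.2,\ 0.3,\ -1.50,\ -1.52$ (radians).  The two clusters are separated by more than $\pi/2$ in either direction, so no open arc of length $\pi/2$ contains four of the points; hence $m=5$, $n=3$.  One computes $\arg(S)\approx -0.43$; after rotating by $+0.43$ to make $S$ real and positive, the angles become approximately $0.53,\ 0.63,\ 0.73,\ -1.07,\ -1.09$.  The first three lie in the inner zone $(-\pi/4,\pi/4)$, the last two lie in the left annulus $(-\pi/2,-\pi/4]$, and the far zone is empty.  Thus $(b_++b_-)+2f = 2 < 4 = 2(m-n)$.  (The lemma itself is not violated here: $|S|\approx 3.36$ while the bound is $2n-m+2(m-n)\cos(\pi/4)\approx 3.83$.)

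The underlying reason is that your zone-wise lower bounds discard too much: annular points near the far boundary contribute nearly $\sin^2(\phi/2)$ rather than $\sin^2(\phi/4)$, and the inner points you drop also contribute nontrivially.  No combinatorial count of zone memberships can recover the sharp constant; in particular, ``intermediate arcs'' cannot rescue the inequality, since the configuration above satisfies the full arc hypothesis yet violates it.  A separate limitation is that the estimate $\sin^2(\phi/2)\ge 2\sin^2(\phi/4)$ for far-zone points requires $\phi\le\pi$, which the lemma does not assume.  The standard proofs proceed differently, via a rearrangement that drives the configuration to the extremal one ($2n-m$ points at angle $0$ and $m-n$ points at each of $\pm\phi/2$, where equality holds), or equivalently via the spacing constraint $\theta_{j+n}-\theta_j\ge\phi$ for the cyclically ordered angles.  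Your derivation of the ``in particular'' clause from the first part (set $\phi=\pi$ and take $n$ equal to the maximum arc count) is correct.
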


\begin{proof}[Proof of Proposition 6.5]  Apply Lemma \ref{lem:largeFC} with $m=p^2$, $\alpha_1=1/p$, and
$\alpha_2\le 2/p$, to get
\[
\max_{r\neq 0}  |\hat{A}(r)|\geq p\left(\frac{p-2}{2(p-1)}\right)^{1/2}.  \]
Since $A$ consists of coset representatives for $p(\cg{p^2}) \subset \cg{p^2}$ it follows that ${\hat A}(r)=0$
for those $r$ that are multiples of $p$ but not of $p^2$.   Thus the maximal non-zero
Fourier coefficient produced above is coprime to $p$.   Thus after dilating the original $A$ by $r$ if needed, we may
assume that the maximal Fourier coefficient is attained at $r=1$.

Now apply Lemma \ref{lem:nathanson} with the $p$ points $e^{2\pi ia/p^2}$ for $a\in A$. We conclude that
some arc of length $\pi$ contains at least $\frac 12 (p+|{\hat A}(1)|)$ points $e^{2\pi i a/p^2}$,
which is the Proposition.
 \end{proof}

\begin{proof}[Proof of Proposition 6.3]  When $p=2$ we may easily translate and dilate $A$ to equal $\{0,1\}$.
When $p=3$ or $5$ Proposition 6.5 already shows that $A$ may be translated and dilated
to lie inside $(-p^2/4,p^2/4]$.   For $p$ at least $7$,  a small calculation shows that Propositions 6.4 and 6.5 may be
combined to give the conclusion of Proposition 6.3.
 \end{proof}

%%%%%%%%%%%%%%%%%%%%%%%%%%%%%%%%%%%%%%%%%%%%%%%%%%%%%%%%%%%%%%%
%%%%%%%%%%%%%%%%%%%%%%%%%% Open problems %%%%%%%%%%%%%%%%%%%%%%
%%%%%%%%%%%%%%%%%%%%%%%%%%%%%%%%%%%%%%%%%%%%%%%%%%%%%%%%%%%%%%%

\section{Open Problems}

For $X$ coset representatives for a normal, finite index subgroup
$H$ in an arbitrary group $G$, we have shown that either $G$ is a
semi direct product of $H$ and another subgroup $K$,  or $C(X)\leq 7/9$. For concrete examples of the pair
$(G,H)$, it is an interesting question to determine what the best
upper bound for $C(X)$ in this statement is. Denote this upper bound
by $C(G,H)$. We showed that $C(\Z,b\Z)=1 - \lfloor b^2/4\rfloor/b^2$; in particular
$C(\Z,b\Z)=3/4+o(1)$ as $b\rightarrow\infty$. Consider the
two-dimensional question of determining $C(\Z\times\Z,b\Z\times
b\Z)$. Clearly $C(\Z\times\Z,b\Z\times b\Z)\leq C(\Z, b\Z)$, and
we conjecture that $C(\Z\times\Z,b\Z\times b\Z) = C(\Z, b\Z)^2 = (9/16+o(1))$;
this bound may be attained by by taking $X=\{-(b-1)/2,\cdots,(b-1)/2\}\times\{-(b-1)/2,\cdots,(b-1)/2\}$.
We are unable to prove this conjecture; however, in \cite{Shao}, Shao makes
partial progress obtaining $C(\Z\times\Z,b\Z\times b\Z)\leq
1-3\sqrt{3}/4\pi\approx 0.59$; note that $9/16=0.5625$ so that Shao's bound is not
too far from our conjecture.  As mentioned earlier, another open problem  is to
extend Theorem 6.1 to other groups.

To end this paper, we make a final remark on Theorem
\ref{thm:approx_hom}. It is natural to wonder what can be said about
an $\epsilon$-approximate homomorphism $f$ for a small positive
constant $\epsilon$ (say $\epsilon=0.01$). We have already seen
that, in general, $f$ need not resemble a genuine homomorphism. On
the other hand, what one can conclude is that $f$ resembles a
genuine {\em local} homomorphism. In the special case when $G$ and
$H$ are vector spaces over finite fields, it turns out that any
$\epsilon$-approximate homomorphism {\em does} resemble a genuine
(global) homomorphism \cite{samorodnitsky}. A quantitative version
of this statement is equivalent to the polynomial Freiman-Ruzsa
(PFR) conjecture, a famous open problem in additive combinatorics.
See \cite{pfr} for the precise statement of this conjecture in the
finite field setting.

\bibliographystyle{plain}
\bibliography{CarriesRef}{}

\end{document}